  \newcommand{\graphpath}[1]{##1}
  \newcommand{\graphpath}[1]{graphs/##1}
\newtheorem{thm}{Theorem}[section]
\newtheorem{conj}[thm]{Conjecture}
\newtheorem{prop}[thm]{Proposition}
\theoremstyle{definition}
\theoremstyle{remark}
\newtheorem{rem}[thm]{Remark}
\numberwithin{equation}{section}
\definecolor{umons-red}{RGB}{168, 0, 57}
\definecolor{umons-gray}{RGB}{150, 150, 150}
\definecolor{umons-turquoise}{RGB}{0, 171, 204}
\let\phi=\varphi
\let\le=\leqslant   
\let\ge=\geqslant   \let\geq=\geqslant
\let\subset=\subseteq
\newcommand{\R}{\mathbb R}
\newcommand{\IS}{\mathbb S}
\newcommand{\IF}{{\mathbb F}}
\newcommand{\IIR}{{\mathbb{IR}}}
\newcommand{\IIF}{{\mathbb{IF}}}
\newcommand{\I}{{\mathcal I}}
\newcommand{\Leb}{{\operator@font L}}
\newcommand{\rounddown}[1]{{\downarrow}(#1)}
\newcommand{\roundup}[1]{{\uparrow}(#1)}
\newcommand{\compactinterval}[3]{
  [#1{\scriptstyle #2}, #1{\scriptstyle #3}]}
\newcommand{\abs}[1]{\mathopen| #1\mathclose|}
\newcommand{\bigabs}[1]{\bigl| #1\bigr|}
\newcommand{\norm}[1]{\mathopen\| #1\mathclose\|}
\newcommand{\liff}{\Leftrightarrow}
\DeclareMathOperator{\Dom}{Dom}
\DeclareMathOperator{\e}{e}
\DeclareMathOperator{\width}{width}
\DeclareMathOperator{\low}{low}
\DeclareMathOperator{\high}{high}
\DeclareMathOperator{\diam}{diam}
\newcommand{\intervaloo}[1]{\mathopen] #1\mathclose[}
\newcommand{\intervalco}[1]{\mathopen[ #1\mathclose[}
\newcommand{\intervaloc}[1]{\mathopen] #1\mathclose]}
\newcommand{\intervalcc}[1]{\mathopen[ #1\mathclose]}
\newcommand{\bigintervalcc}[1]{\bigl[ #1\bigr]}
\newcommand{\ul}{\overline}
\newcommand{\ve}{\varepsilon}
\newcommand{\dl}{\underline}
\newcommand{\intd}{\,{\mathrm d}}
\newcommand{\wto}{\rightharpoonup} 
\newcommand{\code}[1]{{\texttt{#1}}}
\setlist[itemize,1]{leftmargin=3ex, topsep=0.1ex, itemsep=0.1ex}
\newenvironment{algo}[1][]{%
  \par\vspace{2ex}
  \noindent
  \ifthenelse{\equal{#1}{}}{\hspace*{5mm}}{\hspace*{10mm}}%
  \begin{minipage}{0.9\linewidth}
    \addtolength{\baselineskip}{1pt}%
    \newenvironment{block}{\noindent\hspace*{5mm}%
      \begin{tabular}{l}}{\end{tabular}}%
    \ttfamily
    \ifthenelse{\equal{#1}{}}{}{%
    \noindent\hspace*{-5mm} \hbox{#1}\\}%
  }{%
  \end{minipage}%
  \par\vspace{2ex}}
\begin{document}

\begin{abstract}
  In this article, using a Lyapunov-Schmidt reduction on an asymptotic
  Nehari manifold and verified computations,
  we prove that the least energy nodal solutions to
  Lane-Emden equation $-\Delta u = \abs{u}^{p-2} u$ with zero
  Dirichlet boundary conditions on a square are odd with
  respect to one diagonal and even with respect to the other one when
  $p$ is close to $2$.
\end{abstract}

\title[A computer assisted proof of the symmetries of l.e.n.s.]{%
  A computer assisted proof of the symmetries of
  least energy nodal solutions on squares}

\subjclass[2020]{35J20 (Primary) 35B06, 65G40 (Secondary)}

\author[Salort]{Ariel Salort}

\address[A. Salort]{Departamento de Matem\'atica, FCEyN - Universidad de Buenos Aires and
\hfill\break \indent IMAS - CONICET
\hfill\break \indent Ciudad Universitaria, Pabell\'on I (1428) Av. Cantilo s/n. \hfill\break \indent Buenos Aires, Argentina.}
\email{asalort@dm.uba.ar}
\urladdr{http://mate.dm.uba.ar/~asalort}

\author[Troestler]{Christophe Troestler}
\address[C. Troestler]{
  D\'epartment de Math\'ematique,
  Universit\'e de Mons, Place du Parc, 20, B-7000 Mons, Belgium
}

\email{christophe.troestler@umons.ac.be}
\urladdr{http://staff.umons.ac.be/christophe.troestler}
\thanks{The authors are partially supported by the project
  ``Existence and asymptotic behavior of solutions
  to systems of semilinear elliptic partial differential equations''
  (T.1110.14)
  of the \textit{Fonds de la Recherche Fondamentale Collective},
  Belgium.}
\maketitle

\section{Introduction}

Given a connected domain $\Omega\subset \R^N$, $N\geq 2$ and
$2<p<2^*:=\frac{2N}{N-2}$ ($+\infty$ if $N=2$), we consider the
so-called Lane-Emden problem with Dirichlet boundary conditions
\begin{align} \label{LEP}
  \begin{cases}
    -\Delta u = |u|^{p-2}u &\quad \text{ in } \Omega,\\
    u=0 &\quad \text{ on } \partial \Omega.
  \end{cases}
\end{align}
In this article we aim on symmetry properties of solutions of
\eqref{LEP} for values of $p$ close to $2$. The literature on this
subject is vast. In the early '70s Ambrosetti and Rabinowitz
\cite{AmRa73} generalize the work of Nehari \cite{Nehari59} to
prove that \eqref{LEP} has a positive ground state
solution, which, in~\cite{GiNiNi79} was shown to inherit all symmetries
of convex domains.
These solutions are unique when $p$ is close to $2$ and ``usually''
non-degenerate as established by Dancer~\cite{Dancer03}.

Later on, in the '90s, solutions with minimum energy among
sign-changing ones (or
\emph{least energy nodal solutions} hereafter referred to as
\emph{l.e.n.s.\@}) were proved to exists by Castro,
Cossio and Neuberger~\cite{CaCoNe} by minimization on a
``nodal Nehari set.''
A natural question is whether l.e.n.s.\@ inherit (some) symmetries of
the domain. There are several
result in this direction. Aftalion and Pacella~\cite{AfPa} proved
that, on a ball, a l.e.n.s.\@ cannot be radial.
Bartsch, Weth and Willem~\cite{BaWeWi} established that,
on radial domains, l.e.n.s.\@ are foliated Schwarz symmetric and,
in particular, even w.r.t.\@ $N-1$ orthogonal hyperplanes.
In \cite{BBGS}, symmetries on more general domains
were studied, as well as the asymptotic behavior of solutions when $p
\to 2$.
A similar asymptotic analysis was performed by Grossi who showed that,
from any eigenvalue of multiplicity $m$ of the linear problem
$-\Delta u = \lambda u$ with zero Dirichlet boundary conditions,
$m$ branches of solutions emanate~\cite{Grossi09}.

It is worth to mention that the arguments leading to uniqueness
and the symmetry properties of
l.e.n.s.\ of \eqref{LEP} for values of $p$ close to $2$
strongly depends on the simplicity of
$\lambda_2$. Indeed, when $\lambda_2$ is simple, uniqueness of the limit
solution follows by an application the \emph{implicit function
  theorem}. In this case, for $p$ close to~$2$, l.e.n.s.\ are unique (up
their sign) and possess the same symmetry as eigenfunctions
corresponding to $\lambda_2$.  In particular, on a rectangle,
the nodal line is the small median~\cite{BBGS}.

However, the approach is different when $\lambda_2$ has multiplicity.
When $\Omega$ is a ball, as all eigenfunctions for $\lambda_2$ can be
obtained from a one-dimensional subspace by rotations, one is still
able to apply the implicit function theorem and deduce that l.e.n.s.\
are odd w.r.t.\ an hyperplane passing through the center of the ball
and invariant by rotations around the axis orthogonal to that
hyperplane \cite[Theorem~3]{BBGS}.  In particular, their nodal surface
is a $N-1$-dimensional ball.
For general domains, the implicit function theorem can no longer be
applied and symmetries of l.e.n.s.\
for values of $p$ near $2$ are less understood.  Even for
simple domains such as planar squares, many questions remain
open. Particularly, numerical simulations carried out in~\cite{BBGS}
support the following assertion:

\begin{conj}
  \label{conj}
  If $\Omega\subset \R^2$ is a square and $p$ is close to $2$,
  l.e.n.s.\ are even with respect to a diagonal and
  odd with respect to the orthogonal diagonal.  In particular, their
  nodal line is a diagonal.
\end{conj}
Despite the fact that everything is explicit,
a ``paper and pencil proof'' of that fact seems not to
be clear.  The main aim of this article is to
prove this conjecture by using a computer-assisted proof approach.

The manuscript is organized as follows. In section \ref{sec.prelim} we
introduce some preliminary results. In Section
\ref{sec.idea}, we explain the main ingredients of our arguments.
Section~\ref{sec.proof} is devoted to the description and implementation our
computer-assisted proof and in Section~\ref{sec.main} we
put it all together to establish the symmetry property.

\section{Preliminary  results}
\label{sec.prelim}

In this section we introduce some notations and preliminary
results. We first recall a variational formulation for
sign-changing solutions of \eqref{LEP} as well as the limit equation
they fulfill as $p$ approaches $2$. Finally we present some abstract
results on symmetries of solutions for values of $p$ close to $2$.

\subsection{Variational formulation}

In order to study solutions of \eqref{LEP}, and to avoid further
normalizations, we consider the following problem
\begin{align} \label{P} \tag{$\mathcal{P}_p$}
  \begin{cases}
    -\Delta u = \lambda_2|u|^{p-2}u &\quad \text{ in } \Omega,\\
    u=0 &\quad \text{ on } \partial \Omega,
  \end{cases}
\end{align}
where $\lambda_2$ is the second eigenvalue of $-\Delta$ with zero Dirichlet
boundary conditions.
It is straightforward that symmetry properties of solutions
to~\eqref{LEP} and \eqref{P} are the same since $u$ solves \eqref{LEP}
if and only if $\lambda_2^{1/(2-p)}u$ solves \eqref{P}.
Weak solutions to~\eqref{P} are critical points of the
functional $\mathcal{J}_p:H^1_0(\Omega)\to\R$ defined by
\begin{equation*}
  \mathcal{J}_p(u)
  =\frac12 \int_\Omega |\nabla u|^2\intd x
  - \frac{\lambda_2}{p} \int_\Omega |u|^p\intd x.
\end{equation*}
This functional has Fr\'echet derivative given by
\begin{equation*}
  \forall v\in H^1_0(\Omega),\qquad
  \mathcal{J}'_p (u)[v]
  = \int_\Omega \nabla u\cdot \nabla v\intd x
  - \lambda_2 \int_\Omega |u|^{p-2}uv\intd x .
\end{equation*}
We remark that standard regularity theory arguments imply that
weak solutions to~\eqref{P} belong to $C^2_0(\Omega)\cap C(\overline\Omega)$
and hence classical solutions.

Clearly the zero function solves \eqref{P}.
All non-zero critical points of $\mathcal{J}_p$ belong to the Nehari
manifold
\begin{equation*}
  \mathcal{N}_p
  :=\left\{u\in H^1_0(\Omega)\setminus\{0\}:
    \mathcal{J}'_p (u)[u]  =0 \right\}
\end{equation*}
and all sign-changing solutions belong to the nodal Nehari set
\begin{equation*}
  \mathcal{M}_p
  := \left\{ u\in H^1_0(\Omega): u^\pm \in \mathcal{N}_p \right\}
\end{equation*}
where as usual $u^\pm:=\max\{\pm u,0\}$. Given a function $u\in
H^1_0(\Omega)$ with $u^\pm\neq 0$,  $u\in \mathcal{M}_p$
if and only if $\int_\Omega |\nabla u^\pm|^2\intd x
= \lambda_2 \int_\Omega |u^\pm|^p \intd x$.

It can be shown~\cite{BaWeWi, CaCoNe} that the minimum of
$\mathcal{J}_p$ on $\mathcal{M}_p$ is achieved and that all minimizers
are nodal solutions of \eqref{P}---referred to as \emph{least energy
  nodal solutions}---with exactly two nodal domains.

\subsection{The asymptotic equation}
\label{sec:lim-eq}

Let us analyze the behavior of l.e.n.s.\@ solutions of \eqref{P} as $p\to 2$.
To start, any family $(u_p)_{p> 2}$ of l.e.n.s.\ of \eqref{P}
can be proved to be bounded in $H^1_0(\Omega)$ and bounded
away from $0$ (see \cite[Lemma~4.1 and Lemma~4.4]{BBGS}).  This
implies that any weak accumulation point $u_*$
of $(u_p)_{p> 2}$ verifies a limit equation \cite[Theorem~4.5]{BBGS}:
if $u_{p_n} \wto u_*$
weakly in $H^1_0(\Omega)$ as $p_n\to 2$, then $u_{p_n} \to u_*$ in
$H^1_0(\Omega)\setminus\{0\}$ and $u_*$ fulfills
\begin{align} \label{limit.eq}
  \begin{cases}
    -\Delta u_* =\lambda_2 u_* &\quad \text{ in } \Omega,\\
    u_*=0&\quad \text{ on } \partial \Omega.
  \end{cases}
\end{align}
Thus $u_* \in E_2\setminus\{0\}$ where $E_2$ denotes the eigenspace
related to the second eigenvalue $\lambda_2$.
Moreover, $u_*$ minimizes the \emph{asymptotic functional}
$\mathcal{J}_*$ over the \emph{asymptotic Nehari manifold}
$\mathcal{N}_*$, where
\begin{gather}
  \label{j.star}
  \mathcal{J}_*: E_2 \to \R : u \mapsto \mathcal{J}_*(u)
  = \frac{\lambda_2}{4}\int_\Omega u^2- u^2\log u^2 \intd x ,
  \\[1\jot]
  \notag 
  \mathcal{N}_*
  = \{ u \in E_2\setminus\{0\} : \mathcal{J}_*' (u)[u] =0\},
\end{gather}
and $t^2\log t^2$ is extended continuously by $0$ at $t=0$.
In particular $u_*$ is a critical point of $\mathcal{J}_*$.
It is easy
to see that
\begin{equation}
  \label{eq:J.star'}
  \forall v\in E_2,\qquad
  \mathcal{J}_*' (u)[v]
  = -\lambda_2 \int_\Omega v u \log \abs{u}\intd x.
\end{equation}
Any nontrivial critical point of $\mathcal{J}_*$ belongs
to~$\mathcal{N}_*$.
This manifold $\mathcal{N}_*$ is compact and such that
$u\in\mathcal{N}_*$ if and only if $u \ne 0$ and
\begin{equation*}
  \int_\Omega u^2 \log \abs{u}\intd x=0
  \quad \text {or equivalently  iff} \quad
  \mathcal{J}_* (u) = \frac{\lambda_2}{4}\int_\Omega u^2\intd x.
\end{equation*}
Let us now show that $u_*$ satisfies an equivalent minimization
problem.  This problem will be the one used in our computer assisted
proof.
First, as for the classical Nehari manifold, one has that
\begin{equation} \label{eq.t.star}
  \inf_{u \in \mathcal{N}_*} \mathcal{J}_*(u)
  = \inf_{ u \in \IS} \max_{t>0} \mathcal{J}_* (tu),
\end{equation}
where $\IS$ is the $\Leb^2$-unit sphere of $E_2$.
Indeed, observe that, for any $u\in E_2\setminus \{0\}$, the ray
$\{tu : t>0\}$ intersects $\mathcal{N}_*$ at a single point
$t^*_u \, u$ maximizing $\mathcal{J}_*$.  The value of
$t^*_u \in \intervaloo{0,+\infty}$ is easily computed:
\begin{equation}
  \label{eq:proj:N*}
  t^*_u = \exp\left( - \frac{\int_\Omega u^2 \log \abs{u}\intd x}{
      \int_\Omega u^2 \intd x} \right).
\end{equation}
In view of the above expression, given $u\in \IS$, we get
\begin{equation*}
  \mathcal{J}_*(t^*_u u)
  = \frac{\lambda_2}{4}  \int_\Omega (t^*_u u)^2\intd x
  = \frac{\lambda_2}{4} \e^{- 2\int_\Omega u^2 \log \abs{u}\intd x }
\end{equation*}
and consequently,  since  the map $t\mapsto e^{2 t}$ is increasing,
\eqref{eq.t.star} is equivalent to
\begin{equation}
  \label{eq:minimize:S}
  \text{minimize }
  \mathcal{S}_*(u) := -\int_\Omega u^2 \log\abs{u}\intd x
  \text{ on the } \Leb^2\text{-sphere } \IS \text{.}
\end{equation}
Note that for any $u \in \IS$ and $r > 0$,
\begin{equation}
  \label{eq:minimize:rS}
  \mathcal{S}_* (ru)
  = r^2 \mathcal{S}_*(u) - r^2 \log r
\end{equation}
so, instead of \eqref{eq:minimize:S}, one can equivalently minimize
$\mathcal{S}_*$ on the sphere $r\IS$ of radius $r$.

The function $\mathcal{S}_*$ can be seen as the \emph{entropy}
associated to the density $|u|^2$ and accumulation points $u_*$ of the
minimal energy nodal solutions are multiples of the eigenfunctions
with minimal
entropy.
Now that the limit functions $u_*$ of l.e.n.s.\ sequences are
characterized, our following concern is to understand how their
symmetries yield symmetries of the $u_p$ for $p$ close to~$2$.

\subsection{Symmetries}

As already noted, when studying symmetries there are two
possibles scenarios depending of the simplicity of $\lambda_2$.

When $\dim E_2=1$, using
the \emph{implicit function theorem}, it follows that \eqref{P}
has a unique solution (up its sign) for $p \approx 2$.
Therefore \cite[Theorem~2]{BBGS}, there exists $p^*$ sufficiently
close to $2$ such that, for any reflection $R$ such that
$R(\Omega)=\Omega$, a function $u_2 \in E_2\setminus\{0\}$ is even or
odd w.r.t.\ $R$ if and only if $u_p$ possesses the same invariance for
$p \in \intervaloc{2, p^*}$.
For instance, when $\Omega$ is a rectangle, since
$\lambda_2$ is simple, l.e.n.s.\ to
\eqref{P}, with $p \approx 2$, are odd w.r.t.\ the small
median of the rectangle (so their nodal line is that
median) and even w.r.t.\ the large median.

When $\lambda_2$ is not simple, with the exception of radial domains,
the questions of uniqueness and symmetries become more delicate.
Symmetries of l.e.n.s.\ to \eqref{P} for $p$ close to $2$ were
addressed in \cite[Theorem~3.6]{BBGS} by the following abstract
result. Consider a family of groups $(G_\alpha)_{\alpha\in E_2}$
acting on $H^1_0(\Omega)$ in a such way that, for every
$\alpha \in E_2$, $g\in G_\alpha$, $p$ close to $2$, and
$u\in H^1_0(\Omega)$, the following holds:
\begin{equation*}
  \text{(i)}\; g(E_2)=E_2, \quad
  \text{(ii)}\; g(E_2^\perp)=E_2^\perp,\quad
  \text{(iii)}\; g\alpha = \alpha, \quad
  \text{(iv)}\;  \mathcal{J}_p(gu)=\mathcal{J}_p(u).
\end{equation*}
Then, for all $M>0$, if $p$ is close to $2$, any l.e.n.s.\
$u_p\in \{u\in B_M: P_{E_2}(u)\notin B_{1/M}\}$ of \eqref{P} is
invariant under the isotropy group
$G_{\alpha_p}=\{g\in G : g\alpha_p=\alpha_p\}$ of
$\alpha_p=P_{E_2}u_p$, the orthogonal projection of $u_p$ on $E_2$.
In particular, if $G_\alpha$ describes the symmetries (or antisymmetries)
of $\alpha$, for $p$ close to $2$, $u_p$ possesses the same symmetries as
its orthogonal projection $\alpha_p$. For instance, when $\Omega$ is a
square and $p$ is close to $2$, l.e.n.s.\ of \eqref{P} are odd with
respect to the center of the square.

Moreover, as we anticipated, in \cite[Conjecture 5.4]{BBGS} numerical
computations suggest that $u_*$ is odd w.r.t.\ a diagonal and even w.r.t.\
the other diagonal.
It seems natural to think that these symmetries extend to $u_p$ for
$p$ close to~$2$, whence conjecture~\ref{conj}.  This is the content of
our main result.

\begin{thm}\label{main}
  If $\Omega\subset \R^2$ is a square and $p$ is close to $2$,
  l.e.n.s.\ are even with respect to a diagonal and
  odd with respect to the orthogonal diagonal.
\end{thm}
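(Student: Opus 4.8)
The plan is to reduce the symmetry of a l.e.n.s.\ $u_p$ with $p\approx 2$ to a statement about the minimisers of the entropy $\mathcal{S}_*$ on the $L^2$-sphere $\IS\subset E_2$, and to prove that statement by a rigorous interval-arithmetic computation. After rescaling we may take $\Omega=(0,\pi)^2$, so $\lambda_2=5$ and $E_2$ is spanned by the $L^2$-orthonormal eigenfunctions $\hat e_1=\tfrac2\pi\sin x\sin 2y$ and $\hat e_2=\tfrac2\pi\sin 2x\sin y$. Parametrising $\IS$ by $u_\theta=\cos\theta\,\hat e_1+\sin\theta\,\hat e_2$, one gets the factorisation $u_\theta=\tfrac4\pi\sin x\sin y\,(\cos\theta\cos y+\sin\theta\cos x)$, from which one reads off that $u_{\pi/4}$ has its interior nodal line on the anti-diagonal $\{x+y=\pi\}$, is symmetric under the reflection $R$ across the diagonal $\{x=y\}$, and anti-symmetric under the reflection $R'$ across $\{x+y=\pi\}$ --- which is exactly the conclusion of the theorem for $u_{\pi/4}$ (and, symmetrically, for $u_{-\pi/4}$, whose nodal line is $\{x=y\}$). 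The eight symmetries of the square act on the circle $\{u_\theta\}$ through the reflections $\theta\mapsto-\theta$, $\theta\mapsto\pi-\theta$ and $\theta\mapsto\tfrac\pi2-\theta$, generating a dihedral group of order $8$; hence $[0,\pi/4]$ is a fundamental domain, whose endpoints $\theta=0$ (the ``median'' configuration $u_0=\hat e_1$) and $\theta=\pi/4$ lie on reflection axes and are thus critical points of $\mathcal{S}_*$ restricted to $\IS$.

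The core of the proof --- and the step I expect to be the main obstacle --- is a verified numerical computation establishing that
\[
  \mathcal{S}_*(u_\theta)>\mathcal{S}_*(u_{\pi/4})\quad\text{for all }\theta\in[0,\pi/4),
  \qquad\text{and}\qquad
  \partial_\theta^2\mathcal{S}_*(u_\theta)|_{\theta=\pi/4}>0 .
\]
By the fundamental-domain observation this shows that $\mathcal{S}_*$ attains its minimum over $\IS$ exactly on the orbit $\{\pm u_{\pi/4},\pm u_{-\pi/4}\}$ of the diagonal configurations, and that the minimum is non-degenerate modulo the group action; equivalently, rescaling onto $\mathcal{N}_*$ as in Section~\ref{sec:lim-eq}, the diagonal configuration is a non-degenerate critical point of $\mathcal{J}_*\colon E_2\to\R$ realising $\inf_{\mathcal{N}_*}\mathcal{J}_*$. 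The difficulty is that $\mathcal{S}_*(u_\theta)=-\int_\Omega u_\theta^2\log\abs{u_\theta}\intd x$ is transcendental in $\theta$: writing $\log\abs{u_\theta}=\log\tfrac4\pi+\log\sin x+\log\sin y+\log\abs{\cos\theta\cos y+\sin\theta\cos x}$ splits the integral into terms that can be evaluated in closed form plus a double integral carrying a logarithmic singularity along the moving nodal curve $\{\cos\theta\cos y+\sin\theta\cos x=0\}$ and along $\partial\Omega$. One treats it with interval arithmetic: the integrand $u_\theta^2\log\abs{u_\theta}$ extends by continuity to $0$ on the closed square and vanishes like $\varrho^2\abs{\log\varrho}$ near its zero set ($\varrho$ being the distance to that set), so a thin tube around it contributes a quantity that can be enclosed and made arbitrarily small, while on the complement the integrand and its $x,y,\theta$-derivatives are bounded; a verified quadrature on a fine mesh then produces an enclosure of $\mathcal{S}_*(u_\theta)$ and, after differentiation under the integral sign (the first two $\theta$-derivatives of the integrand staying integrable near the nodal set), of $\partial_\theta\mathcal{S}_*(u_\theta)$ and $\partial_\theta^2\mathcal{S}_*(u_\theta)$, valid for $\theta$ ranging over subintervals --- which is what allows one to pass from the mesh to all of $[0,\pi/4]$, the strict positivity of $\partial_\theta^2\mathcal{S}_*$ at $\theta=\pi/4$ taking care of a whole neighbourhood of that point.

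Granting this, the theorem follows by combining the limit analysis of Section~\ref{sec:lim-eq} with a Lyapunov--Schmidt reduction. By \cite[Lemma 4.1, Lemma 4.4 and Theorem~4.5]{BBGS} any family $(u_p)_{p>2}$ of l.e.n.s.\ is bounded in $H^1_0(\Omega)$ and away from $0$, and every $H^1_0$-accumulation point minimises $\mathcal{J}_*$ over $\mathcal{N}_*$, hence --- by the previous step --- belongs to the set $\mathcal{O}$ of the four diagonal configurations (rescaled onto $\mathcal{N}_*$), which form a single orbit under the square's symmetry group. Thus for $p$ close to $2$ the projection $P_{E_2}u_p$ lies near $\mathcal{O}$; replacing $u_p$ by $gu_p$ for a suitable symmetry $g$ of the square (which permutes the two diagonals and sends l.e.n.s.\ to l.e.n.s.), we may assume $P_{E_2}u_p$ is near the element $w\in\mathcal{O}$ on the ray through $u_{\pi/4}$. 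Being a constrained minimiser, $w$ is a free critical point of $\mathcal{J}_*$ on $E_2$, non-degenerate by the first step; so Theorem~\ref{Lyapunov-Schmidt-reduction}, together with this non-degeneracy, yields $r>0$ and $p_0>2$ such that for $2<p<p_0$ there is at most one l.e.n.s.\ whose $E_2$-projection lies in the ball $B_r(w)$, and likewise in $B_r(-w)$. Now $R$ and $R'$ preserve $\Omega$, leave $E_2$ invariant, satisfy $\mathcal{J}_p\circ R=\mathcal{J}_p\circ R'=\mathcal{J}_p$, and $Rw=w$, $R'w=-w$; being $L^2$-isometries they send $B_r(w)$ onto $B_r(w)$ and $B_r(-w)$ respectively. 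Hence $Ru_p$ is a l.e.n.s.\ with $E_2$-projection $R(P_{E_2}u_p)\in B_r(w)$, so $Ru_p=u_p$: $u_p$ is symmetric across $\{x=y\}$; and $R'u_p$ and $-u_p$ are l.e.n.s.\ with $E_2$-projection in $B_r(-w)$, so $R'u_p=-u_p$: $u_p$ is anti-symmetric across $\{x+y=\pi\}$. Undoing the symmetry $g$, we conclude that $u_p$ is symmetric with respect to one diagonal of the square and anti-symmetric with respect to the other, which is Theorem~\ref{main}.
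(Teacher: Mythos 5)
Your proposal is correct and follows essentially the same route as the paper: reduction to the one-dimensional entropy $g(\theta)=\mathcal{S}_*(u_\theta)$ on a fundamental domain $[0,\pi/4]$, a verified interval computation combining exclusion away from $\pi/4$ with positivity of the second derivative near $\pi/4$, and then a Lyapunov--Schmidt reduction plus local uniqueness to transfer the symmetries of the non-degenerate limit to $u_p$. The two points you gloss over --- the $C^2$ regularity of $\mathcal{J}_*$ (delicate because the moving nodal lines sweep a set of positive measure, defeating a naive dominated-convergence argument) and the transfer of non-degeneracy from $g''(\pi/4)>0$ to $\partial_u^2\mathcal{J}_*(u_*)$ on all of $E_2$ --- are exactly what Propositions~\ref{prop:J''} and~\ref{transfer-non-degeneracy} supply.
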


Unfortunately the above mentioned abstract result is not powerful enough for
this purpose.  Indeed, the computer assisted proof below will
characterize the symmetries of $u_* \in \mathcal{N}_*$ but that does
not readily implies that $P_{E_2} u_p = u_*$, so it is not clear that
$u_p$ enjoys the same symmetries as $u_*$.
To do that, the implicit function theorem by needs to be replaced by a
Lyapunov-Schmidt reduction.  Such approach was done by
Grossi and we extract here (with our notations)
the part of his results that is useful for our
purposes~\cite{Grossi09}.

\begin{thm}[Lyapunov-Schmidt reduction]
  \label{Lyapunov-Schmidt-reduction}
  Assume $\mathcal{J}_* \in C^2(E_2\setminus\{0\}; \R)$ and
  let $u_* \in E_2\setminus\{0\}$ be a non-de\-ge\-ne\-ra\-te critical point
  of $\mathcal{J}_*$.  There exists a neighborhood $U_*$ of $u_*$ in
  $H^1_0(\Omega)$ and a function
  $\gamma : \intervalco{2, 2+\ve} \to H^1_0(\Omega)$, $\ve > 0$,
  such that $\gamma(2) = u_*$ and
  \begin{equation*}
    \forall p \in \intervaloo{2, 2+\ve},\
    \forall u \in U_*,\qquad
    u \text{ solves } (\mathcal{P}_p)
    \iff
    u = \gamma(p).
  \end{equation*}
\end{thm}

\begin{prop}
  \label{symmetry-of-lens}
  Let $(u_p)_{p>2}$ be a family of l.e.n.s.\ converging to
  $u_* \in E_2\setminus\{0\}$.
  Assume $\mathcal{J}_* \in C^2(E_2\setminus\{0\}; \R)$ and
  $u_*$ is a non-degenerate critical point of $\mathcal{J}_*$.
  If $g u_* = g_*$
  (where $gu(x) := u(g^{-1}x)$ or $gu(x) := -u(g^{-1}x)$)
  for some $g \in O(N)$ such that
  $g\Omega = \Omega$ and $\mathcal{J}_p(gu) = \mathcal{J}_p(u)$ for
  all $u \in H^1_0(\Omega)$ and $p$, then, for $p$ close to $2$,
  $g u_p = u_p$.
\end{prop}

\begin{proof}
  If the conclusion does not hold, there exists a sequence $(u_{p_n})$
  of l.e.n.s.\ to~$(\mathcal{P}_{p_n})$ such that
  $\forall n,\ gu_{p_n} \ne u_{p_n}$.  Going if necessary to a
  subsequence, $u_{p_n} \wto u_* \in E_2$ and thus $u_{p_n} \to u_*$
  \cite[Theorem~4.5]{BBGS}.  In particular, for $n$ large enough,
  $(p_n, u_{p_n}) \in \intervalco{2, 2+\ve} \times U_*$ and so
  $u_{p_n} = \gamma(p_n)$.  But $gu_{p_n}$ is also a solution
  to~$(\mathcal{P}_{p_n})$ and $gu_{p_n} \to gu_* = u_*$.  Therefore,
  for $n$ large enough, $gu_{p_n} = \gamma(p_n) = u_{p_n}$ which is a
  contradiction.
\end{proof}

\begin{rem}
  The regularity assumption will be shown to hold for the square in
  section~\ref{sec:analysis-of-concavity}.
\end{rem}

\section{Reduction to a one-dimensional functional}
\label{sec.idea}

We sketch our strategy to prove Theorem \ref{main}.
Let $\Omega\subset\R^2$ be a square.  Without loss of generality we
can consider $\Omega = \intervaloo{0,1}^2$. In this case, a basis of
eigenfunctions of $E_2$ is given by
\begin{equation}
  \label{eq:basis:E2}
  \phi_1(x,y) = \sin(\pi x) \sin(2\pi y), \qquad
  \phi_2(x,y) = \sin(2\pi x) \sin(\pi y).
\end{equation}
Moreover, it easy to check that these functions are orthogonal in
$\Leb^2(\Omega)$, which allow us to parametrize $\tfrac{1}{2}\IS$ as
\begin{align} \label{param.s}
  \begin{split}
    \tfrac{1}{2} \IS
    &= \bigl\{u\in E_2: \|u\|_{\Leb^2(\Omega)} = \tfrac{1}{2} \bigr\}\\
    &= \bigl\{u_\theta(x,y):= \phi_1(x,y)\cos \theta - \phi_2(x,y) \sin \theta
    \ : \ \theta\in \intervalco{0,2\pi} \bigr\}.
  \end{split}
\end{align}
Hence, thanks to \eqref{eq:minimize:rS}, instead of \eqref{eq:minimize:S}
we can restrict ourselves to
study the minimization problem
\begin{equation} \label{eq.s.theta}
  \min_{\theta \in \intervalcc{0, 2\pi}} \mathcal{S}_*(u_\theta).
\end{equation}
Since $u_{\theta + \pi/2}(x,y) = u_\theta(y, -x)$,
$u_{\pi/4 - \theta}(x,y) = u_{\pi/4+\theta}(y, x)$
and $\mathcal{S}_*$ is invariant under the group of symmetries of the
square (the dihedral group of order~$8$)\label{S:symmetries},
we have that the
function $g(\theta) : \intervalcc{0,\tfrac{\pi}{4}} \to\R$ given by
\begin{equation} \label{entropy.f}
  g(\theta):=\mathcal{S}_*(u_\theta)
  =-\int_\Omega f\bigl(u_\theta(x,y)\bigr)\intd x \intd y,
  \hspace{1.1em} \text{with }
  f(t) =
  \begin{cases}
    t^2 \log|t|& \text{if } t \ne 0,\\
    0& \text{if } t = 0,
  \end{cases}
\end{equation}
is $\frac{\pi}{2}$-periodic and
$g(\tfrac{\pi}{4}-\theta)=g(\tfrac{\pi}{4}+\theta)$
(see Fig.~\ref{fig:graph:entropy}).

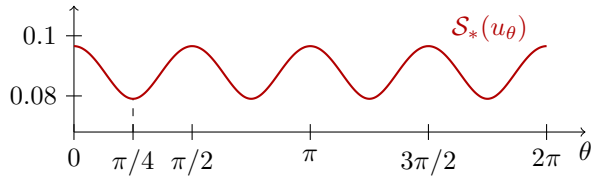
\begin{figure}[htb]
  \centering
  \newcommand{\ymin}{0.068}
  \begin{tikzpicture}[x=10mm, y=400mm]
    \draw[->] (0, \ymin) -- ++(6.8, 0) node[below]{$\theta$};
    \foreach \x/\l in {0/0, 0.785398/{\pi/4}, 1.570796/{\pi/2},
      3.14159/\pi, 4.712389/{3\pi/2}, 6.28318/{2\pi}}{
      \draw (\x, \ymin) ++(0, 3pt) -- ++(0, -6pt) node[below]{$\l$};
    }
    \draw[->] (0, \ymin) -- (0, 0.11);
    \foreach \y in {0.08, 0.1}{
      \draw (3pt, \y) -- (-3pt, \y) node[left]{$\y$};
    }
    \draw[dashed] (0.785398, \ymin) -- (0.785398, 0.0790503);
    \begin{scope}[color=red!70!black]
    \draw[thick] plot file{\graphpath{entropy.dat}};
    \node[above right] at (4.9, 0.095) {$\mathcal{S}_*(u_\theta)$};
    \end{scope}
  \end{tikzpicture}
  \caption{Graph of $\theta \mapsto \mathcal{S}_*(u_\theta)$.}
  \label{fig:graph:entropy}
\end{figure}

Observe that $u_{\pi/4}\in \frac{1}{2}\IS$ is even with respect to a
diagonal and odd with respect to the second one.
Because of the symmetries of $g$, it is clear that $\theta = \pi/4$ is
a critical point of $g$.
If we prove that the minimal energy in \eqref{eq.s.theta}
is achieved at $\theta = \pi/4$
and that the corresponding critical point of $\mathcal{J}_*$ is
non-degenerate, Theorem~\ref{symmetry-of-lens} will assert that
l.e.n.s.\ to \eqref{P} would
enjoy the symmetries of Conjecture~\ref{conj}.

\section{Ingredients of the computer-assisted proof}
\label{sec.proof}

In this section we prove that the function $\theta \mapsto g(\theta)$
reaches its
minimum at $\theta=\tfrac{\pi}{4}$.
Throughout this section $\Omega$ stands for the
square $\intervaloo{0,1}^2$.

The arguments in the proof can be sketched roughly as follows. We
design an adaptive integration method \code{integ} which allows to
integrate a function of two variables on $\Omega$.  By applying it to
$(x,y) \mapsto f(u_\theta(x,y))$, we obtain a function \code{entropy}
which
estimates $g$. With these tools, our task is reduced to
discard subintervals of $\intervalcc{0, \tfrac{\pi}{4}}$
which are guaranteed not to contain the minimum of~$g$.
This is performed by the function \code{exclude}.  The outcome is
a small interval around $\pi/4$ in which the minimum must lie.  Finally,
in that interval, we analyze the concavity of $g$ in order to guarantee
that it contains a unique critical point.  Consequently $\pi/4$ is the
only critical point in that interval and is therefore the minimum.

\medskip

When our scheme is implemented in a computer, one has to be
conscious of the existence of several sources of error involved in the
computations: numerical integration methods carry their corresponding
error, rounding errors come from computing with floating point
numbers, etc.  Hence, we have to track the precision of our
computations for our conclusions to be valid in spite of these various
errors.
Since in general real numbers
are not \emph{machine numbers}, to guarantee rigorous results when
performing computations, instead of considering \emph{values}, we will
consider \emph{intervals} containing the true values. In the next
subsection, for the reader's convenience,
we recall some basic facts about computing with floating
numbers and interval arithmetic.

\subsection{Interval arithmetic}

When implementations are made in digital electronic devices, due to
the limited space and speed, only a finite subset of real numbers is
available.  For the present considerations, we will use the set of
double precision floating point numbers that we will denote $\IF$.
Elementary arithmetic operations ($+$, $-$, $\cdot$, $/$) do not
necessarily return an element in $\IF$ even when their operands are in
$\IF$.  The standard IEEE-754 mandates that their implementation
returns a value in $\IF$ closest to the exact result.  This yields
small rounding errors which may propagate badly along the
computations.  A classical example~\cite{Rump-revisited,Rump88} is to evaluate
$f(x,y) = 333.75 y^6+x^2(11x^2 y^2-y^6 -121 y^4-2)+5.5 y^8$ at
$(x,y) = (77617,33096)$.
Despite the fact that all coefficients of $f$ and $x$, $y$
belong to $\IF$, when all computations are performed with
\emph{double precision}, the result is $-1.180...\cdot 10^{21}$ while
the correct value is $-2$.
Rounding errors may also affect the constants in the program: for
example $0.1 \notin \IF$ (because it has an infinite binary expansion)
and $\pi \notin \IF$ are rounded in computer memory.
Errors also come from the necessary approximation (think of the
truncation of an infinite sum) performed by the algorithm computing
functions such as $\sin$, $\cos$, $\log$,...

In order to account for all these errors along computations performed
on a computer, values $a \in \R$ will be represented by an interval
\begin{equation*}
  [a] = \intervalcc{\dl{a}, \ul a}
  = \{ x\in\R: \overline a \le x \le \underline a \}
\end{equation*}
(where $[a]$ must read as a single symbol) such that $a \in [a]$.
The set of intervals with endpoints in $\R$ will be denoted $\IIR$.
The \emph{width} of $[a]$ is $\ul{a} - \dl{a}$ and denoted $\width([a])$.
We also define $\low(\intervalcc{\dl{a}, \ul a}) := \dl{a}$
and $\high(\intervalcc{\dl{a}, \ul a}) := \ul{a}$.
Vectors $a \in \R^N$ will be represented as $[a] = \bigl(
[a_1], \dotsc, [a_N]) \in (\IIR)^N$ standing for the \emph{box}
$[a_1] \times \cdots \times [a_N]$.

Elementary arithmetic operations ($+$, $-$, $\cdot$, $/$) and in
general any function $f : \R^N \to \R^M$ must be extended to an interval
function $[f] : (\IIR)^N \to (\IIR)^M$ so that the following
\emph{containment property} holds:
\begin{equation*}
  \forall [a] \in \Dom [f],\qquad
  f([a]) = \bigl\{ f(x) : x \in [a] \cap \Dom f \bigr\}
  \subseteq [f]\bigl([a]\bigr).
\end{equation*}
This means that all exact values $f(x)$ for $x$ ranging in the
interval $[a]$ are contained in the interval returned by the function
$[f]$ on the operand $[a]$.
In this case, we will call $[f]$ an \emph{interval extension} of $f$.
Composition of interval extensions are again interval extensions.

When implemented on a computer, interval extensions of
$f : \R^N \to \R^M$ will naturally be functions $[f]$ defined on (a
subset of) $(\IIF)^N$ and returning values in $(\IIF)^M$ where $\IIF$
denotes the set of intervals with endpoints in~$\IF$.
Rounding errors of arithmetic operators will be handled by using
\emph{directed rounding modes} of the processor: for example, the
interval extension of the addition if $[a] \in \IIF$ and
$[b] \in \IIF$ will be given by
$\bigintervalcc{\rounddown{\dl{a} + \dl{b}}, \roundup{\ul{a} + \ul{b}}}$
where $\rounddown{\ldots}$ (resp.\ $\roundup{\ldots}$) means that the
outcome of operations within the braces if rounded towards
$-\infty$ (resp.\ $+\infty$).
For approximations made to evaluate functions, a theoretical error
bound must be derived in terms of computable quantities which will
then be used to provide an (over)estimation of the error on the
computer.

We refer the reader interested in more details about interval analysis
to \cite{book1,book2}.

\subsection{Interval extension of $\mathbf{u^2 \,\text{log}|u|}$}

One of the basic functions to implement for our task is
$u \mapsto u^2 \log\abs{u}$.  The naive extension is
$[u] \mapsto [u]^2 \cdot [\log]\bigabs{[u]}$ but it is unsuitable.
Indeed if $0 \in [u]$, the logarithm interval extension will return an
unbounded interval (of the form $\intervalcc{-\infty, a}$, which is
possible because $\pm\infty \in \IF$) and so will subsequent
computations.  Thus a more precise interval extension of this function is
necessary.  Because this function is even, one can suppose w.l.o.g.\
that $[u] \subseteq \intervalcc{0, +\infty}$.  In order to derive
interval bounds, one has to distinguish sub-intervals based on the
monotonicity of $u \mapsto u^2 \log u$ (which is decreasing on
$\intervalcc{0, \e^{-1/2}}$ and increasing after) and the sign of
$\log u$.  Thus one has first to compute an interval estimate
$\intervalcc{\dl{m}, \ul{m}} = [\exp]\bigl( \intervalcc{-0.5, -0.5}
\bigr)$ of the location of the minimum.  Then, for an interval of the
form $\intervalcc{0, \ul{u}}$ with $0 < \ul{u} < \dl{m}$, the
algorithm will return the interval
$\intervalcc{\rounddown{\roundup{\ul{u}^2} \cdot \log \ul{u}}, 0}$
(one has to round up the square because $\log \ul{u} < 0$).  A similar
analysis is done for other intervals $[u]$.
This more clever interval extension returns tight bounds for the
function and, in particular, returns bounded intervals even when $0
\in [u]$ (see Fig.~\ref{fig:t2logt}).

\begin{figure}[htb]
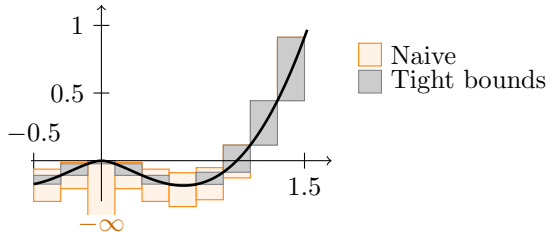

  \centering
  \begin{tikzpicture}[x=18mm, y=18mm]
    \begin{scope}[color=orange, fill=orange!10]
      \draw[fill] (1.9, 0.7) rectangle +(2ex, 2ex)
      node[right, yshift=-1ex, color=black]{Naive};
      \clip (-0.5, -0.4) rectangle (2, 1);
      \input{\graphpath{f_naive.tex}}
    \end{scope}
    \node[color=orange!80!black] at (0., -0.47) {$-\infty$};
    \begin{scope}[color=gray, fill=gray!40]
      \input{\graphpath{f.tex}}
      \draw[fill] (1.9, 0.5) rectangle +(2ex, 2ex)
      node[right, yshift=-1ex, color=black]{Tight bounds};
    \end{scope}
    \draw[->] (-0.52, 0) -- (1.7, 0);
    \draw[->] (0, -0.3) -- (0, 1.15);
    \draw (-0.5, 3pt) node[above]{$-0.5$} -- (-0.5, -3pt);
    \draw (1.5, 3pt) -- (1.5, -3pt) node[below]{$1.5$};
    \foreach \y in {0.5, 1}{
      \draw (3pt, \y) -- (-3pt, \y) node[left]{$\y$};
    }
    \draw[color=black, line width=1pt] plot file{\graphpath{f_float.dat}};
  \end{tikzpicture}
  \vspace{-1ex}
  \caption{Evaluation of $u \mapsto u^2 \log\abs{u}$.}
  \label{fig:t2logt}
\end{figure}

\subsection{Integration}

In order to compute $\mathcal{S}_*$, an integration on
the square $\Omega$ must
be performed.
There is a vast literature about integration methods, both for the
one-variable and multivariate cases, including for interval arithmetic
(see for example~\cite{Galdino12,Petras07,Petras07} and the
references therein).
For accurate evaluation of integrals in high dimensions, it is
recommended to avoid tensor products of one-dimensional rules (because
they require a number of function evaluations that is exponential in
the dimension) and instead turn to rules such as the one devised by
Smolyak~\cite{Smolyak63}.
In this paper however, the dimension is small and the fact that error
bounds of tensor product formula do not depend on mixed derivatives is
interesting.  Our integrand is also not smooth everywhere.

In this subsection, we briefly describe the adaptive verified
integration scheme that we are using.
Given an interval extension $[f]$ of a function $f : \Omega \to \R$,
we want to
return an interval $[\I_\Omega]$ such that\footnote{We choose to
  compute the integral mean because the weights in the sum
  approximating the integral are independent of the size of $\Omega$.}
\begin{equation*}
  [\I_\Omega] \ni
  \I_\Omega
  := \frac{1}{|\Omega|} \int_{\Omega}f(x,y)\intd x \intd y.
\end{equation*}
To compute $[\I_\Omega]$, a \emph{rule} that evaluates $[f]$ at
various points and estimates the error in terms of the variation (on
$\Omega$) of $f$ and its derivatives is used.  Such a rule returns
an interval $[\I_\Omega^0]$.  If we are happy with the width of
$[\I_\Omega^0]$, we take $[\I_\Omega] = [\I_\Omega^0]$ and stop.  If
not, we split $\Omega$ into four squares of equal sizes
$\Omega = \Omega_1 \cup \Omega_2 \cup \Omega_3 \cup \Omega_4$ and
recursively apply this procedure to have $[\I_{\Omega_i}]$,
$i = 1,\dotsc, 4$.  Then $[\I_\Omega]$ is defined~by
\begin{equation}
  \label{eq:split:integ}
  [\I_\Omega] := \frac{1}{4} \sum_{i=1}^4 [\I_{\Omega_i}]
  \ni \sum_{i=1}^4 \frac{\abs{\Omega_i}}{\abs{\Omega}} \,
  \I_{\Omega_i}
  = \I_\Omega .
\end{equation}
Since \eqref{eq:split:integ} implies
$\width([\I_\Omega]) \le \max_{i=1,\dotsc, 4} \width([\I_{\Omega_i}])$,
a natural stopping criteria for the recursion is that
$\width([\I_{\Omega_i}]) \le \ve$ where $\ve$ is a desired tolerance.

\subsubsection{Basic integration rule}

The simplest way to estimate $\I_\Omega$ is to use the following
simple fact (related to the mean value theorem for integrals): if
$f$ is integrable and
$\forall \xi \in \Omega,\linebreak[2]\ \dl{m} \le f(\xi) \le \ul{m}$,
then
\begin{equation*}
  \frac{1}{|\Omega|} \int_{\Omega}f(\xi)\intd \xi
  \in \intervalcc{\dl{m}, \ul{m}}.
\end{equation*}
If $\Omega \subseteq [x] \times [y]$, this is in particular true for
$\intervalcc{\dl{m}, \ul{m}} = [f]([x], [y])$.  Thus, one takes
\begin{equation*}
  [\I_\Omega^0] := [f]\bigl( [x], [y] \bigr).
\end{equation*}
The associated adaptive procedure does not perform very well however.
Indeed, we expect the width of $[f]\bigl( [x], [y] \bigr)$ to be of
size $\norm{\nabla f}_{\Leb^\infty(\Omega)} \diam\Omega$ where
$\diam\Omega$ is the diameter of $\Omega$.  Thus, if $h$ denotes the
diameter of the small squares obtained at depth $d$ of the recursion,
for $f \in C^{1}(\overline{\Omega})$, $\width( [\I_\Omega] )
= O(h)$.  In terms of the number $n$ of points at which the function
needs to be evaluated and in
terms of the depth $d$ of the recursion, this gives:
\begin{equation}
  \label{eq:order:basic}
  \width([\I_\Omega])
  = O(h)
  = O\Bigl( \frac{1}{\sqrt{n}} \Bigr)
  = O\Bigl( \frac{1}{2^d} \Bigr).
\end{equation}
This is quite slow.  For example, integrating the analytic function
$\phi_1$ (defined in~\eqref{eq:basis:E2}) using a recursion depth of~$14$
must perform at least 268\,468\,225 function evaluations for a final
precision of
about $10^{-3}$.  This is pictured in Fig.~\ref{fig:error-basic}.
Clearly, this is not practical as the procedure to determine
a small neighborhood of the minumum requires many evaluations of
$\mathcal{S}_*$ with a good precision.

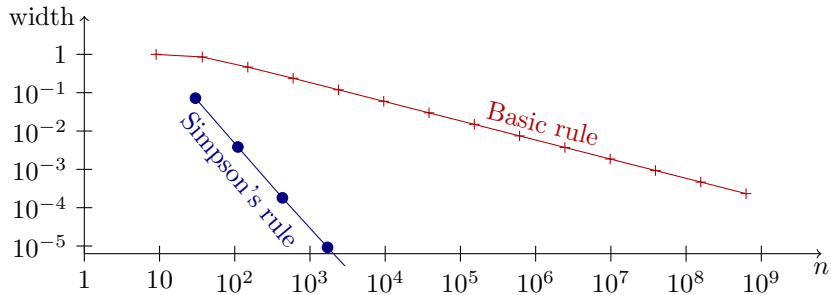
\begin{figure}[htb]
  \centering
  \newcommand{\ymin}{-5.2}
  \begin{tikzpicture}[x=10mm, y=5.1mm]
    \draw[->] (0, \ymin) -- (9.8, \ymin) node[below]{$n$};
    \draw[->] (0, \ymin) -- (0, 1) node[left]{width};
    \foreach \x/\l in {0/1, 1/10}{
      \draw (\x, \ymin) ++(0, 3pt) -- ++(0, -6pt) node[below]{$\l$};
    }
    \foreach \x in {2,...,9}{
      \draw (\x, \ymin) ++(0, 3pt) -- ++(0, -6pt) node[below]{$10^{\x}$};
    }
    \foreach \y/\l in {0/1, -1/{10^{-1}}, -2/{10^{-2}},
      -3/{10^{-3}}, -4/{10^{-4}}, -5/{10^{-5}}}{
      \draw (3pt, \y) -- (-3pt, \y) node[left]{$\l$};
    }
    \begin{scope}[color=red!70!black]
      \draw[mark=+] plot file{\graphpath{integ0_error.dat}};
      \node[rotate=-14] at (6.1, -1.8) {Basic rule};
    \end{scope}
    \begin{scope}[color=blue!50!black]
      \clip (0, -5.5) rectangle (10, 0);
      \draw[mark=*] plot file{\graphpath{integ4_error.dat}};
      \node[rotate=-50] at (2.05, -3.3) {Simpson's rule};
    \end{scope}
  \end{tikzpicture}
  \caption{Errors of adaptive integration rules.}
  \label{fig:error-basic}
\end{figure}

\subsubsection{Simpson's rule}
\label{sec:Simpson}

To locate the minimum of~\eqref{entropy.f} reasonably fast, we
evaluate $\mathcal{S}_*(u_\theta)$ at $\theta=\pi/4$
with a precision of $10^{-9}$.  A higher
order integration rule is therefore desirable.  The one we use is the
tensor product of two Simpson's rules.  Recall that the error of the
one-dimensional Simpson's rule applied to a function
$f \in C^4(\intervalcc{a, a+h}; \R)$ is given by
error~\cite[p.~288]{Davis-Rabinowitz84}:
\begin{equation*}
  \frac{1}{h} \int_a^{a+h} f(x) \intd x
  - \frac{1}{6} \bigl(f(a) + 4 f(a +\tfrac{1}{2}h) + f(a+h) \bigr)
  = - \frac{1}{2880} h^4 f^{(4)}(\xi)
\end{equation*}
for some $\xi \in \intervaloo{a, a+h}$.  This yields a
two-dimensional rule on a rectangle~$P$:
\begin{equation*}
  \text{Simpson}(f)
  = \frac{1}{36} \bigl(
  f_{00} + f_{20} + f_{02} + f_{22}
  + 4 (f_{10} + f_{01} + f_{21} + f_{12}) + 16 f_{11} \bigr)
\end{equation*}
where $f_{ij} := f(a_1 + i h_1/2, a_2 + j h_2/2)$,
and the following error bound:
\begin{equation}
  \label{eq:bound-simpson-2D}
  \biggl| \frac{1}{\abs{P}} \int_P f
  - \text{Simpson}(f) \biggr|
  \le \frac{1}{2880} \bigl( h_1^4 \norm{\partial_x^4 f}_\infty
  + h_2^4 \norm{\partial_y^4 f}_\infty \bigr).
\end{equation}
From this, an interval version is easily derived with convergence
speed
$\width [\I_P] = O(h^4) = O\bigl( \frac{1}{n^2} \bigr) = O\bigl(
\frac{1}{16^d} \bigr)$.

\subsubsection{Integrating a non-smooth function}

For the purpose of integrating $u_\theta^2 \log\abs{u_\theta}$, the
Simpson's rule cannot be used over the whole $\Omega$.
Indeed, whenever $u_\theta = 0$, the second and fourth derivatives
of the function are unbounded.
Thus, the following algorithm to estimate
$\frac{1}{|\Omega|} \int_{\Omega}f(x,y) \intd (x,y)$ is used.

\begin{algo}
  \code{mean\_integ($[f], [\partial^4f],
    [x_{\text{left}}], [x_{\text{right}}], [y_{\text{left}}], [y_{\text{right}}],
    d, \text{tol}$)}
  \normalfont
  \begin{itemize}
  \item Evaluate the derivatives $[\partial^4_{x}f]$,
    and $[\partial^4_{y}f]$ on the box
    $[x] \times [y] :=
    \bigintervalcc{\low([x_{\text{left}}]), \high([x_{\text{right}}])} \times
    \bigintervalcc{\low([y_{\text{left}}]), \high([y_{\text{right}}])}$
    and compute $\ul{e}$ using the right hand side
    of~\eqref{eq:bound-simpson-2D},
    rounding all operations towards~$+\infty$.
  \item let $[x_{\text{mid}}]$ (resp.\ $[y_{\text{mid}}]$) be the
    middle points of $[x_{\text{left}}]$ and $[x_{\text{right}}]$
    (resp.\ $[y_{\text{left}}]$ and $[y_{\text{right}}]$).
  \item If $\ul{e} \le \tfrac{1}{2} \code{tol}$, then let $[\I^0]$ be defined
    by~$\text{Simpson}(f) + [-\ul{e}, \ul{e}]$,
    otherwise set $[\I^0] := [f]\bigl([x], [y]\bigr)$.
  \item If $d \le 0 \lor \ul{e} \le \tfrac{1}{2}\code{tol}$, then
    return $[\I^0]$;\\
    else, call
    recursively \code{mean\_integ} on the four sub-squares
    (see Fig.~\ref{iteration.square}) with $d := d - 1$.  This will
    return $[\I^0_1], \dotsc,\linebreak[2] [\I^0_4]$.  Finally, in
    accordance with~\eqref{eq:split:integ}, return
    $\bigl([\I^0_1] + \cdots + [\I^0_4] \bigr) / 4$.
  \end{itemize}
\end{algo}

Note that, in the above algorithm, the parameter
\code{tol} is used as an indication of when
to stop the recursion,
the final accuracy being actually given by the width of the
returned interval.
Fig.~\ref{fig:integ-rules} depicts how this
algorithm subdivides the square $\intervalcc{0,1}^2$ to compute
$\int_{\intervalcc{0,1}^2} u_\theta^2 \log\abs{u_\theta}$ with
$\theta = \pi/4$ to achieve a tolerance of~$0.05$ with depth $d=5$.
A sub-square is
colored whenever the higher order formula was used on it.

The function \texttt{integ($[f], [\partial^4f], d, \text{tol}$)}
mentioned at the beginning of section~\ref{sec.proof} is simply defined
as
$\code{mean\_integ(}[f], [\partial^4f], \intervalcc{0,0},
\intervalcc{1,1}, \intervalcc{0,0}, \intervalcc{1,1}, d,
\text{\code{tol)}}$.

\begin{figure}[htb]
  \centering
  \begin{minipage}[b]{0.43\linewidth}
    \centering
    \newcommand{\xaxis}{-0.35}
    \begin{tikzpicture}[x=17mm, y=17mm]
      \draw (0,0) -- (1,0) -- (1,1) -- (0,1) -- cycle;
      \draw[red,dashed] (0,0.5) -- (1,0.5);
      \draw[red,dashed] (0.5,0) -- (0.5,1);
      \foreach \xy/\pos/\c/\l in {{0,0}/{below left}/blue/{00},
        {0.5,0}/{below}/red/{\text{m}0}, {1,0}/{below right}/blue/10,
        {1,0.5}/{right}/red/{1\text{m}}, {1,1}/{above right}/blue/{11},
        {0.5,1}/above/red/{\text{m}1},   {0,1}/{above left}/blue/01,
        {0,0.5}/left/red/{0\text{m}},
        {0.5,0.5}/{below right}/red/{\text{mm}}}{
        \fill[\c] (\xy) circle (1.7pt);
      }
      \draw[->] (\xaxis, -0.3) -- (1.35, -0.3);
      \draw[->] (\xaxis, -0.3) -- (\xaxis, 1.2);
      \foreach \x/\l in {0/left, 0.5/mid, 1/right}{
        \draw (\x, -0.3) ++(0, 3pt) -- ++(0, -6pt)
        node[below, inner sep=1pt]{\small $[x_{\text{\l}}]$};
        \draw (\xaxis, \x) ++(3pt, 0) -- ++(-6pt, 0)
        node[left, inner sep=1pt]{\small $[y_{\text{\l}}]$};
      }
    \end{tikzpicture}
    \hspace*{10pt}

    \caption{Square subdivisions.}
    \label{iteration.square}
  \end{minipage}
  \hfill
  \begin{minipage}[b]{0.56\linewidth}
    \centering
    \definecolor{light-green}{RGB}{0, 200, 0}
    \newcommand{\PrecisionAchieved}[4]{}
    \begin{tikzpicture}[x=25mm, y=25mm]
      \input{\graphpath{integration4.tex}}
    \end{tikzpicture}
    \par
    \caption{Using the appropriate integration rule
      with Simpson's rule.}
    \label{fig:integ-rules}
  \end{minipage}
\end{figure}

Now that verified integration is available, it is easy to define
$\code{entropy(} \theta, d, \code{tol)}$ that estimates $g(\theta)$.
Indeed it suffices to pass the function
$(x,y) \mapsto - (u_\theta(x,y))^2 \cdot
\log\abs{u_\theta(x,y)}$ and its fourth derivatives (which are
straightforward to compute as everything is explicit) to \code{integ}.

\subsection{Locating the minimum}

As we described before, our task is reduced to minimize the entropy
function $\mathcal{S}_*(u_\theta)$ on the sphere $\IS$.  Due to
symmetry considerations, it is enough to prove that the function
$g(\theta) = -\int_\Omega u_\theta^2 \log\abs{u_\theta}\intd x \intd y$
defined on $\intervalcc{0, \tfrac{\pi}{4}}$ attains its minimum at
$\theta=\tfrac{\pi}{4}$, where $u_\theta$ is defined in~\eqref{param.s}.
The symmetries also imply that $\frac{\pi}{4}$ is a critical point of $g$.

In this section, we determine a ``small'' interval around $\pi/4$ in
which the minimum is guaranteed to lie.  To that effect, we first
compute an interval
$\intervalcc{\dl{m}, \ul{m}} := [g]\bigl([\pi]/4\bigr) \in \IIF$
to have bounds on the value $g(\pi/4)$,
where $[\pi] \in \IIF$ denotes a small interval containing $\pi$.
Then, starting from the left of $\intervalcc{0, \pi/4}$, we try to
remove intervals in which we are sure the minimum does not lie.  More
precisely, an interval $[x]$ is discarded if the lower bound of
$[g]\bigl([x]\bigr)$ (which contains all $g(\xi)$ for $\xi \in [x]$)
is greater than $\ul{m}$.  This simple idea leads to the following
algorithm.

\begin{algo}[{exlude($[g], \dl{x}, \ul{x}, \text{step}, n$)}]
  if $n \le 0$ then return $\intervalcc{\dl{x}, \ul{x}}$\\
  else\\
  \begin{block}
    let $x_{\text{next}} = \min \{ \dl{x} + \text{step}, \ul{x} \}$\\[1pt]
    let $\intervalcc{\dl{y}, \ul{y}}
    = [g]\bigl(\intervalcc{\dl{x}, \ul{x}}\bigr)$\\
    if $\dl{y} > \ul{m}$ then return
    exclude($[g], x_{\text{next}}, \ul{x}, \text{step}, n - 1$)\\
    else return
    exclude($[g], \dl{x}, \ul{x}, \frac{1}{2}\text{step}, n - 1$)
  \end{block}
\end{algo}

\noindent
Remark that $n$ is the number of iterations to be performed.
In practice, the depth (and hence the precision) with which the
integral in $[g]$ is computed is gradually increased as $\dl{x}$ gets
closer to $\pi/4$.
This function is called with $\dl{x} = 0$ and $\ul{x} =
\high([\pi]/4)$ to obtain the location of the minimum.
This is illustrated on Fig.~\ref{fig:min-g} for $n = 80$ where the
gray rectangles are $[x] \times [g]\bigl([x]\bigr)$ with $[x]$
excluded.

\begin{figure}[htb]
  \centering
  \begin{tikzpicture}[x=50mm, y=700mm]
    \begin{scope}
      \definecolor{excluded}{gray}{0.7}
      \definecolor{taken}{RGB}{168, 0, 57}
      \input{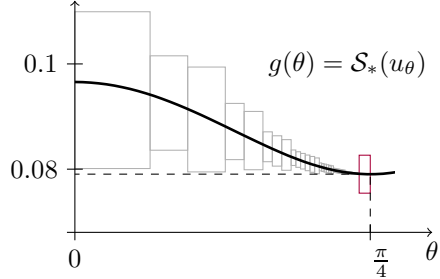}
    \end{scope}
    \begin{scope}
      \node[right] at (0.49, 0.1) {$g(\theta) = \mathcal{S}_*(u_\theta)$};
      \clip (0, 0.068) rectangle (0.85, 0.1);
      \draw[line width=1pt] plot file {\graphpath{entropy.dat}};
      \draw[dashed] (0.7854, 0.068) -- (0.7854, 0.07905016) -- (0, 0.07905016);
    \end{scope}
    \begin{scope}[shift={(0, 0.068)}]
      \draw[->] (-3pt, 0) -- (0.95, 0) node[below]{$\theta$};
      \draw[->] (0, -3pt) node[below]{$0$} -- (0, 0.044);
      \draw (0.7854, 3pt) -- +(0, -6pt) node[below right, xshift=-3pt]{%
        $\frac{\pi}{4}$};
    \end{scope}
    \foreach \y in {0.08, 0.1} {
      \draw (3pt, \y) -- (-3pt, \y) node[left]{$\y$};
    }
  \end{tikzpicture}
  \vspace{-2ex}
  \caption{The exclusion procedure for locating the minimum of $g(\theta)$.}
  \label{fig:min-g}
\end{figure}

\subsection{Analysis of concavity}
\label{sec:analysis-of-concavity}

The final step is to analyze the concavity of $g$ in the interval $I$
returned by \code{exlude} to guarantee that $\theta=\tfrac{\pi}{4}$ is
the unique critical point in~$I$, hence the only minimum.

\begin{prop}
  \label{prop:J''}
  The functional $\mathcal{J}_* : E_2 \to \R$ defined
  by~\eqref{j.star} is of class $C^2$ on $E_2\setminus\{0\}$
  and its second derivative
  $\partial_u^2 \mathcal{J}_*$ at $u \in E_2\setminus\{0\}$ is the
  bilinear map given by
  \begin{equation}
    \label{eq:J''}
    \partial_u^2 \mathcal{J}_*(u) [u_1, u_2]
    = -\lambda_2 \int_\Omega u_1 u_2 (1 + \log\abs{u}) \intd x \intd y.
  \end{equation}
  The function $g : \R \to \R$ defined by~\eqref{entropy.f} with
  $u_\theta$ being given in~\eqref{param.s} is $C^2$ and
  \begin{equation}
    \label{eq:g''}
    g''(\theta)
    = -2\left( 
    \frac{1}{4} + g(\theta) +  \int_\Omega |u_\theta'|^2 \log\abs{u_\theta}
      \intd x \intd y\right)
    \qquad
    \text{where } ' = \frac{\partial}{\partial \theta}.
  \end{equation}
\end{prop}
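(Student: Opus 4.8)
The plan is to establish the assertions about $\mathcal{J}_*$ first, by differentiating twice under the integral sign, and then to read off the assertions about $g$ by composing $\mathcal{J}_*$ with the explicit smooth curve $\theta\mapsto u_\theta$. Identify $E_2$ with $\R^2$ through $u = a\phi_1 + b\phi_2 \leftrightarrow (a,b)$; with $F(t) := t^2 - 2t^2\log\abs{t}$ and $F(0):=0$ one has $\mathcal{J}_*(u) = \frac{\lambda_2}{4}\int_\Omega F(a\phi_1+b\phi_2)\intd x\intd y$. Since $F'(t) = -4t\log\abs{t}$ (with $F'(0)=0$) and $F''(t) = -4 - 4\log\abs{t}$ for $t\ne0$, the formulas \eqref{eq:J.star'} and \eqref{eq:J''} are exactly what the chain rule produces once the identities $\frac{\lambda_2}{4}F'(t) = -\lambda_2 t\log\abs{t}$ and $\frac{\lambda_2}{4}F''(t) = -\lambda_2(1+\log\abs{t})$ are noted --- \emph{provided} differentiation under the integral sign is licit.

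The first differentiation is routine: the gradient of $(a,b)\mapsto F(a\phi_1+b\phi_2)$ is $F'(u)(\phi_1,\phi_2)$, of modulus $\le C\bigabs{u\log\abs{u}} \le C C_\delta\max\{1,\abs{u}^{1+\delta}\}$ by the elementary bound $\bigabs{t\log\abs{t}}\le C_\delta\max\{1,\abs{t}^{1+\delta}\}$ (cf.\ the proof of Theorem~\ref{Lyapunov-Schmidt-reduction}); for $(a,b)$ in a bounded set this is dominated by a fixed bounded function on $\Omega$, hence $\mathcal{J}_*\in C^1(E_2\setminus\{0\})$ with $\partial_u\mathcal{J}_*$ given by \eqref{eq:J.star'}. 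The second differentiation is the delicate point: the three second partials of the integrand equal $F''(u)\phi_i\phi_j$, $i,j\in\{1,2\}$, and $F''(u)$ carries the term $-4\log\abs{u}$, which is unbounded along the nodal set of $u$. To invoke dominated convergence one needs, for every compact $K\subset E_2\setminus\{0\}$, a function $\Phi_K\in L^1(\Omega)$ with $\bigabs{\log\abs{u}}\le\Phi_K$ a.e.\ for all $u\in K$. Here the explicit structure of $E_2$ is used: from \eqref{eq:basis:E2},
\begin{equation*}
  a\phi_1 + b\phi_2 = 2\sin(\pi x)\sin(\pi y)\,\bigl(a\cos(\pi y) + b\cos(\pi x)\bigr),
\end{equation*}
so $\log\abs{u} = \log 2 + \log\sin(\pi x) + \log\sin(\pi y) + \log\bigabs{a\cos(\pi y)+b\cos(\pi x)}$; the first three summands are in $L^1(\Omega)$ independently of $(a,b)$, and the last is controlled by the sublevel estimate $\bigabs{\{(x,y)\in\Omega : \abs{a\cos(\pi y)+b\cos(\pi x)} < s\}} \le C_K\,s\,\bigabs{\log s}$, uniformly for $(a,b)\in K$ and $0<s\le\tfrac12$. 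At distance $\ge\delta$ from $\partial\Omega$ one has $\abs{\nabla(a\cos(\pi y)+b\cos(\pi x))} = \pi\sqrt{b^2\sin^2(\pi x)+a^2\sin^2(\pi y)} \ge \pi\sin(\pi\delta)\sqrt{a^2+b^2}$, bounded below uniformly for $(a,b)\in K$, which yields the estimate there; within distance $\delta$ of $\partial\Omega$ the vanishing of the prefactor $2\sin(\pi x)\sin(\pi y)$ together with the quadratic Taylor polynomial of $a\cos(\pi y)+b\cos(\pi x)$ at each corner (or local integrability of $\log$ of a non-trivial real-analytic function) gives the bound. As $\int_0^{1/2} s^{-1}\,s\bigabs{\log s}\intd s<\infty$, such a $\Phi_K$ exists, dominated convergence applies, and $\mathcal{J}_*\in C^2(E_2\setminus\{0\})$ with second derivative \eqref{eq:J''}.

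To pass to $g$, write $\log u^2 = 2\log\abs{u}$ in \eqref{j.star} to get the algebraic identity $\mathcal{J}_*(u) = \frac{\lambda_2}{4}\norm{u}_{L^2(\Omega)}^2 + \frac{\lambda_2}{2}\,\mathcal{S}_*(u)$. The curve $\theta\mapsto u_\theta = \cos\theta\,\phi_1 - \sin\theta\,\phi_2$ of \eqref{param.s} is $C^\infty$ from $\R$ to $E_2$; since $\norm{\phi_1}_{L^2(\Omega)} = \norm{\phi_2}_{L^2(\Omega)} = \tfrac12$ and $\phi_1\perp\phi_2$ in $L^2(\Omega)$, one has $\norm{u_\theta}_{L^2(\Omega)} = \norm{u_\theta'}_{L^2(\Omega)} = \tfrac12$ and $u_\theta'' = -u_\theta$ (with $' = \partial/\partial\theta$), so the curve takes values in $E_2\setminus\{0\}$. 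Consequently $g(\theta) = \mathcal{S}_*(u_\theta) = \frac{2}{\lambda_2}\mathcal{J}_*(u_\theta) - \tfrac18$ is $C^2$ on $\R$ as a composition of $C^2$ maps, and the chain rule gives $g''(\theta) = \frac{2}{\lambda_2}\bigl(\partial_u^2\mathcal{J}_*(u_\theta)[u_\theta',u_\theta'] + \partial_u\mathcal{J}_*(u_\theta)[u_\theta'']\bigr)$. Substituting $u_\theta'' = -u_\theta$ and inserting \eqref{eq:J.star'} and \eqref{eq:J''}, together with $\int_\Omega \abs{u_\theta'}^2\intd x\intd y = \tfrac14$ and $\int_\Omega u_\theta^2\log\abs{u_\theta}\intd x\intd y = -g(\theta)$, yields exactly \eqref{eq:g''}.

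The only genuinely non-routine ingredient is the second differentiation of $\mathcal{J}_*$, that is, taming the logarithmic singularity of the integrand along the (moving) nodal line of $u\in E_2$ so as to produce a locally uniform integrable majorant for $\log\abs{u}$; once that is secured, everything else reduces to the elementary calculus above.
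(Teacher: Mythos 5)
Your overall strategy --- establish that $\mathcal{J}_*$ is $C^2$ with second derivative \eqref{eq:J''} and then obtain \eqref{eq:g''} by composing with the smooth curve $\theta\mapsto u_\theta$ --- is essentially the paper's, and your algebra deriving \eqref{eq:g''} from \eqref{eq:J.star'} and \eqref{eq:J''} is correct. The gap sits exactly at the step you yourself identify as the delicate one. The majorant $\Phi_K$ you postulate does not exist: for any compact $K\subset E_2\setminus\{0\}$ whose elements span more than one direction (and a neighbourhood of a point $u_0$, which is what differentiation requires, always does), the nodal lines $\{u=0\}$, $u\in K$, sweep out a set of \emph{positive measure} in $\Omega$; at every point $(x,y)$ of that set there is some $u\in K$ with $u(x,y)=0$, so $\sup_{u\in K}\bigabs{\log\abs{u(x,y)}}=+\infty$ there, and no integrable function can dominate $\bigabs{\log\abs{u}}$ simultaneously for all $u\in K$. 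This is precisely the obstruction the paper flags before introducing its workaround (``the nodal lines cover a non-zero measure set making any dominating function non-integrable''). Your sublevel estimate $\bigabs{\{\abs{a\cos(\pi y)+b\cos(\pi x)}<s\}}\le C_K\,s\abs{\log s}$ is correct and relevant, but it bounds the sublevel set of each \emph{individual} $u\in K$ uniformly in $u$; it does not yield a single pointwise majorant, because those sublevel sets move with $u$.

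Two repairs are available. The paper's: for each $\theta$, parametrize the nodal line as a graph $y=r_\theta(x)$ and change variables $y=s\,r_\theta(x)$, so that in the $(x,s)$ coordinates the singularity sits on the fixed set $\{s=1\}$ and an honest $\theta$-independent integrable majorant (essentially $\bigabs{\log\bigl(\min\{x^2,\tfrac14\}(1-s)^2\bigr)}$ up to constants) does exist. Alternatively, your sublevel estimate shows that $\{\log\abs{u}\}_{u\in K}$ is bounded in $L^2(\Omega)$, hence uniformly integrable; since $\log\abs{u_n}\to\log\abs{u}$ a.e.\ when $u_n\to u$ in $E_2\setminus\{0\}$, Vitali's convergence theorem (rather than dominated convergence) passes the limit through the integral, and, writing the difference quotient of $\partial_u\mathcal{J}_*$ pointwise as $\int_0^1 F''(u+\tau h\phi_i)\intd\tau$ tested against $\phi_i\phi_j$, this salvages your computation. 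As written, however, the proof is incomplete at its central step.
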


\begin{proof}
  \textit{1.}  The first derivative of $\partial_u \mathcal{J}_*$
  exists and is given by~\eqref{eq:J.star'}.  It is sufficient to show
  that the map
  $(t, \theta) \mapsto \partial_u \mathcal{J}_*(tu_\theta)$ is $C^1$.
  Indeed, first notice that
  $\mathcal{S} : \intervaloo{0, +\infty} \times \R \to
  E_2\setminus\{0\} : (t, \theta) \mapsto t u_\theta$ is surjective.
  Second, one readily shows that, for all $(t, \theta)$, the
  differential $\partial_{(t,\theta)} \mathcal{S} (t,\theta)$ is an
  one-to-one linear map so the inverse function theorem says that
  $\mathcal{S}$ is locally invertible as a $C^1$-map which proves the
  differentiability of $\partial_u \mathcal{J}_*$ on
  $E_2\setminus\{0\}$.  Since
  $\partial_u \mathcal{J}_*(tu)[v] = t \, \partial_u
  \mathcal{J}_*(u)[v] -\lambda_2 t \log t \int uv$, the derivative
  w.r.t.\ $t$ clearly exists and is continuous.  It remains to show
  that the derivative w.r.t.\ $\theta$ exists, is given by
  \begin{equation}
    \label{eq:J''theta:t}
    \partial_\theta\bigl( \partial_u \mathcal{J}_*(tu_\theta) \bigr) [v]
    = -\lambda_2 \int_\Omega v \, tu'_\theta (1 + \log\abs{tu_\theta})
    \intd x \intd y
  \end{equation}
  and is continuous.  Given the above formula for
  $\partial_u \mathcal{J}_*(tu)$, it is enough to
  prove~\eqref{eq:J''theta:t} for $t = 1$.  From the form of both
  partial derivatives as well as $\partial_{(t,\theta)} \mathcal{S}$,
  one readily deduces~\eqref{eq:J''}.

  \smallskip
  \noindent
  \textit{2.}~Remark that the function $f : \R \to \R$ defined
  in~\eqref{entropy.f} is $C^1$ and $u_\theta$ and $u_\theta'$ are
  bounded functions of $(x,y)$.  Thus Lebesgue dominated convergence
  Theorem implies that $g$ is continuously differentiable and
  \begin{equation*}
    g'(\theta)
    = - \int_\Omega \partial_tf(u_\theta) u_\theta' \intd x \intd y
    = - \int_\Omega u_\theta' u_\theta (1 + 2 \log\abs{u_\theta})
    \intd x \intd y .
  \end{equation*}
  A direct computation shows that
  $\int_\Omega u'_\theta u_\theta = 0$, so $g'$ may be rewritten as
  \begin{equation}
    \label{eq:g'}
    g'(\theta)
    = -2 \int_\Omega u_\theta' u_\theta \log\abs{u_\theta} \intd x \intd y.
  \end{equation}
  Notice this is essentially
  $\partial_u \mathcal{J}_*(u_\theta)[u'_\theta]$ so, if we
  establish~\eqref{eq:J''theta:t}, \eqref{eq:g''} will easily follow by
  the chain rule and the facts: $u_\theta'' = -u_\theta$ and
  $\norm{u_\theta'}_{\Leb^2(\Omega)} = 1/2$.

  \smallskip
  \noindent
  \textit{3.}~We now establish~\eqref{eq:J''theta:t} for $t = 1$.  Using
  the mean value Theorem, the differential quotient may be written as
  \begin{equation*}
    \frac{\partial_u \mathcal{J}_*(u_{\theta+\ve})[v]
      - \partial_u \mathcal{J}_*(u_\theta)[v]}{\ve}
    = -\lambda_2 \int_\Omega v u'_{\theta_\ve} (1 +
    \log\abs{u_{\theta_\ve}}) \intd x \intd y
  \end{equation*}
  for some $\theta_\ve \in \intervaloo{\theta, \theta + \ve}$, so the
  theorem will be proved if we show that the map
  \begin{equation}
    \label{eq:J''theta}
    \theta \mapsto
    \int_\Omega \varphi(\theta, x, y) \intd x\intd y,
    \quad\text{where }
    \varphi(\theta, x, y) :=
    v \, u'_\theta \, (1 + \log\abs{u_\theta}),
  \end{equation}
  is well defined and continuous for all $v \in E_2$.  To prove this,
  the Lebesgue dominated convergence Theorem cannot be applied
  directly because, as $\theta$ varies, the nodal lines $\{ (x,y) :
  u_\theta(x,y) = 0 \}$ cover a non-zero measure set making any
  dominating function non-integrable.

  We will show the continuity of~\eqref{eq:J''theta} for
  $\theta \in \intervalcc{0, \pi/4}$ as, thanks to symmetries (see
  p.~\pageref{S:symmetries}), the remaining range of $\theta$ is very
  similar.  First note that $u_\theta$ may be written as
  $u_\theta(x,y) = z(x,y) \,  w_\theta(x,y)$ with
  \begin{equation*}
    z(x,y) := 2 \sin(\pi x) \sin(\pi y)
    \qquad\text{and}\qquad
    w_\theta(x,y) :=
    \cos \theta \cos(\pi y) - \sin \theta \cos(\pi x).
  \end{equation*}
  The nodal line of $u_\theta$ is the subset of $\Omega$ where
  $w_\theta$ vanishes.  For $\theta \in \intervalcc{0, \pi/4}$, it is
  the graph of a function
  $r_\theta : \intervalcc{0,1} \to \intervalcc{0,1} : x \mapsto
  r_\theta(x)$ defined implicitly by
  \begin{equation}
    \label{eq:r:theta}
    \forall x \in \intervalcc{0,1},\qquad
    \cos \theta \cos(\pi r_\theta(x)) - \sin \theta \cos(\pi x) = 0.
  \end{equation}
  Let us split $\Omega$ as the union of $\Omega^+$ and $\Omega^-$
  where
  \begin{align*}
    \Omega^+ &:= \{ (x,y) : x \in \intervaloo{0,1} \text{ and }
               0 < y \le r_\theta(x) \},
    \\
    \Omega^- &:= \{ (x,y) : x \in \intervaloo{0,1} \text{ and }
               r_\theta(x) < y < 1 \}.
  \end{align*}
  Thanks to the oddness of $u_\theta$ and $u_\theta'$ w.r.t.\ to the
  center of $\Omega$, we only have to consider $\Omega^+$, the case of
  $\Omega^-$ being similar.  One has
  \begin{equation*}
    \begin{aligned}
      \int_{\Omega^+} \varphi(\theta, x, y) \intd y\intd x
      &= \int_0^1 \int_0^{r_\theta(x)} \varphi(\theta, x, y) \intd y\intd x\\
      &= \int_0^1 \int_0^1 \varphi(\theta, x, s r_\theta(x)) r_\theta(x)
      \intd s \intd x.
    \end{aligned}
  \end{equation*}
  If we find an integrable function (independent of $\theta$)
  dominating $\varphi(\theta, x, s r_\theta(x)) r_\theta(x)$, the
  proof will be done.  Note that, because $\norm{u_\theta'}_\infty \le
  2$,
  \begin{equation*}
    \bigabs{\varphi(\theta, x, s r_\theta(x)) r_\theta(x)}
    \le 2 \norm{v}_\infty
    \bigl(1 + \bigabs{\log \abs{u_\theta(x, s r_\theta(x))}}\bigr)
  \end{equation*}
  so it suffices to find an integrable function dominating
  $\bigabs{\log \abs{u_\theta(x, s r_\theta(x))}}$.  Now, since
  $u_\theta \ge 0$ on $\Omega^+$ and $u_\theta \le 2$, it remains to
  find a lower bound of $u_\theta$ whose logarithm is integrable.
  Observe that $\log u_\theta = \log z + \log w_\theta$.  Since
  $z(x,y) \ge 2x(1-x) y(1-y)$, $z$ in integrable on $\Omega$, whence
  on~$\Omega^+$.  For $w_\theta$, using~\eqref{eq:r:theta}, one has
  \begin{align*}
    w_\theta(x, s r_\theta(x))
    &= \cos(\theta)
      \bigl(\cos(\pi s r_\theta(x)) - \cos(\pi r_\theta(x)) \bigr) \\
    &= \cos(\theta)  \pi r_\theta(x)
      \int_s^1 \sin(\pi \sigma r_\theta(x)) \intd \sigma\\
    &\ge \cos(\theta) \pi r_\theta(x)
      \int_s^1 \pi\, \sigma r_\theta(x) (1 - \sigma r_\theta(x)) \intd\sigma
    && \text{(as } \sigma r_\theta \in \intervalcc{0,1} \text{)}\\
    &= \cos(\theta) \pi^2 r^2_\theta(x) (1 - s)
      \Bigl( \frac{1 + s}{2} - \frac{1 + s + s^2}{3} r_\theta(x)
      \Bigr)  \\
    &\ge \cos(\theta) \pi^2 r^2_\theta(x) (1 - s)
      \Bigl( \frac{1 + s}{2} - \frac{1 + s + s^2}{3} \Bigr)
    && \text{(as } r_\theta \le 1 \text{)}\\
    &\ge \tfrac{1}{6} \cos(\theta) \pi^2 r^2_\theta(x) (1 - s)^2
    && \text{(for } s \in \intervalcc{0,1} \text{)} \\
    &\ge \tfrac{\sqrt{2}}{12} \pi^2 \,
      \min\{x^2, \tfrac{1}{4}\} \,(1 - s)^2
    &&  \text{(as } \theta \in \intervalcc{0, \tfrac{\pi}{4}} \text{).}
  \end{align*}
  The logarithm of this last bound is easily seen to be an integrable
  function of $(x, s) \in \intervaloo{0,1}^2$, thereby concluding the
  proof.
\end{proof}

In view of~\eqref{eq:g''}, to check the concavity of $g$, the
following condition must be verified:
\begin{equation}
  \label{cond.2d}
  g''(\theta)>0 \liff h(\theta) > \tfrac{1}{4} + g(\theta),
  \qquad \text{where }
  h(\theta)
  := -\int_\Omega |u_\theta'|^2 \log\abs{u_\theta} \intd x \intd y.
\end{equation}
Because the integrand of $h$ is singular whenever $u_\theta = 0$,
when evaluated with interval arithmetic, $[\log]$ will return
intervals of the form $\intervalcc{-\infty, \ul{y}} \in \IIF$ which
will contaminate the sum computing the integral.  The important remark
however is that the singularity helps $h$ to be large and so does not
need to be controlled.  Interval extensions $[h]$ of $h$ will return
intervals of the form $\intervalcc{\dl{y}, +\infty}$ and, to check
condition~\eqref{cond.2d}, one will verify that
$\dl{y} > \high\bigl(1 + [g]([\theta])\bigr)$.

\subsection{Non-degeneracy}

\begin{prop}
  \label{transfer-non-degeneracy}
  If $\theta_*$ is a non-degenerate critical point of $g$, then
  $u_* = t^*_{u_{\theta_*}} u_{\theta_*}$, where $t^*_u > 0$ is
  defined by~\eqref{eq:proj:N*}, is a non-degenerate critical
  point of $\mathcal{J}_*$.
\end{prop}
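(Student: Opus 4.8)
The plan is to pull $\mathcal{J}_*$ back to $\intervaloo{0,+\infty}\times\R$ through the parametrization~\eqref{param.s}, where it acquires an explicit closed form in terms of $g$, and then to transport non-degeneracy through this (local) change of variables. I would reuse the map $\mathcal{S}:\intervaloo{0,+\infty}\times\R\to E_2\setminus\{0\}:(t,\theta)\mapsto t\,u_\theta$ introduced in the proof of Proposition~\ref{prop:J''}. Since $u_\theta=\cos\theta\,\phi_1-\sin\theta\,\phi_2$ and $u_\theta'=-\sin\theta\,\phi_1-\cos\theta\,\phi_2$ are linearly independent in $E_2$ for every~$\theta$, the differential $D\mathcal{S}(t,\theta):(\dot t,\dot\theta)\mapsto\dot t\,u_\theta+\dot\theta\,t\,u_\theta'$ is an isomorphism $\R^2\to E_2$, so $\mathcal{S}$ is a local $C^\infty$-diffeomorphism near every point. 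Using $\int_\Omega u_\theta^2\intd x\intd y=\tfrac14$ and $g(\theta)=-\int_\Omega u_\theta^2\log\abs{u_\theta}\intd x\intd y$, a direct computation gives
\begin{equation*}
  F(t,\theta):=\mathcal{J}_*(t\,u_\theta)
  =\frac{\lambda_2}{16}\,t^2(1-2\log t)+\frac{\lambda_2}{2}\,t^2 g(\theta),
\end{equation*}
which is $C^2$ on $\intervaloo{0,+\infty}\times\R$ since $g$ is $C^2$ by Proposition~\ref{prop:J''}.

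Next I would check that $(t_*,\theta_*)$ with $t_*:=t^*_{u_{\theta_*}}$ is a critical point of $F$. One computes $\partial_t F=\lambda_2 t\bigl(g(\theta)-\tfrac14\log t\bigr)$ and $\partial_\theta F=\tfrac{\lambda_2}{2}t^2 g'(\theta)$; by~\eqref{eq:proj:N*} together with $\int_\Omega u_{\theta_*}^2\intd x\intd y=\tfrac14$ one has $\log t_*=4g(\theta_*)$, and $g'(\theta_*)=0$ by hypothesis, so both partials vanish at $(t_*,\theta_*)$. As $\mathcal{S}$ is a local diffeomorphism and $\mathcal{S}(t_*,\theta_*)=u_*$, this already shows that $u_*$ is a critical point of $\mathcal{J}_*$. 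Differentiating once more and evaluating at $(t_*,\theta_*)$---using $\log t_*=4g(\theta_*)$, which makes the $g(\theta_*)$ terms cancel, and $g'(\theta_*)=0$---gives $\partial_{tt}F=-\tfrac{\lambda_2}{4}$, $\partial_{t\theta}F=0$ and $\partial_{\theta\theta}F=\tfrac{\lambda_2}{2}t_*^2 g''(\theta_*)$. Hence the Hessian of $F$ at $(t_*,\theta_*)$ is the diagonal matrix with entries $-\tfrac{\lambda_2}{4}$ and $\tfrac{\lambda_2}{2}t_*^2 g''(\theta_*)$, which is invertible precisely when $g''(\theta_*)\neq0$.

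Finally, since $\mathcal{J}_*'(u_*)=0$, the chain rule applied to $F=\mathcal{J}_*\circ\mathcal{S}$ yields, for all $h,k\in\R^2$,
\begin{equation*}
  F''(t_*,\theta_*)[h,k]
  =\partial_u^2\mathcal{J}_*(u_*)\bigl[D\mathcal{S}(t_*,\theta_*)h,\ D\mathcal{S}(t_*,\theta_*)k\bigr],
\end{equation*}
the contribution of $\mathcal{J}_*'(u_*)$ and the second differential of $\mathcal{S}$ dropping out because $(t_*,\theta_*)$ is a critical point. Since $D\mathcal{S}(t_*,\theta_*)$ is an isomorphism onto $E_2$, the bilinear form $\partial_u^2\mathcal{J}_*(u_*)$ is non-degenerate if and only if $F''(t_*,\theta_*)$ is, that is, if and only if $g''(\theta_*)\neq0$; this is exactly the assumption that $\theta_*$ is a non-degenerate critical point of $g$, which finishes the argument. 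The only point requiring care is the constant bookkeeping in the Hessian of $F$---in particular checking that the radial entry $\partial_{tt}F(t_*,\theta_*)$ collapses to the nonzero constant $-\lambda_2/4$ once the relation $\log t_*=4g(\theta_*)$ is used; the transfer of non-degeneracy is otherwise soft, resting only on the chain rule at a critical point together with the invertibility of $D\mathcal{S}$.
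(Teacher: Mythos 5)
Your proof is correct, and it reaches the same destination as the paper's proof---the Hessian of $\mathcal{J}_*$ at $u_*$ is diagonal in the basis given by the radial direction $u_{\theta_*}$ and the angular direction $u'_{\theta_*}$, with diagonal entries proportional to $-\lambda_2$ and $g''(\theta_*)$---but via a genuinely different computational route. You pull $\mathcal{J}_*$ back through the polar parametrization $\mathcal{S}(t,\theta)=t\,u_\theta$, obtain the explicit formula $F(t,\theta)=\tfrac{\lambda_2}{16}t^2(1-2\log t)+\tfrac{\lambda_2}{2}t^2g(\theta)$, and differentiate; the transfer of non-degeneracy then follows from the chain rule at a critical point together with the fact that $D\mathcal{S}(t_*,\theta_*)$ is an isomorphism onto $E_2$. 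The paper instead works intrinsically with the bilinear form $\partial_u^2\mathcal{J}_*(u_*)$ of Proposition~\ref{prop:J''} and exploits the cancellation
$\partial_u^2\mathcal{J}_*(u_*)[u_*,v]=\partial_u^2\mathcal{J}_*(u_*)[u_*,v]-\partial_u\mathcal{J}_*(u_*)[v]=-\lambda_2\int_\Omega u_*v$,
which immediately gives the first row and column of the Hessian, and then expresses $\partial_u^2\mathcal{J}_*(u_*)[u'_{\theta_*},u'_{\theta_*}]$ through $g''(\theta_*)$ using the projection formula~\eqref{eq:proj:N*}. Your version makes the bookkeeping more mechanical and self-verifying (the closed form for $F$ does all the work), at the cost of recomputing from scratch what Proposition~\ref{prop:J''} already gives; the paper's version is shorter once one notices the cancellation, but that trick is less transparent. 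Either way the arithmetic agrees: your $F_{tt}(t_*,\theta_*)=-\lambda_2/4$ and $F_{\theta\theta}(t_*,\theta_*)=\tfrac{\lambda_2}{2}t_*^2g''(\theta_*)$ are exactly the paper's $-\lambda_2\int u_*^2$ and $\tfrac{\lambda_2}{2}g''(\theta_*)$ after accounting for the scaling $u_*=t_*u_{\theta_*}$.
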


\begin{proof}
  Let $\theta_*$ be a critical point of~$g$.  On one hand, by
  definition~\eqref{eq:proj:N*} of $t^*_{u_{\theta_*}}$, one has
  $\partial_u \mathcal{J}_*(u_*)[u_{\theta_*}] = 0$.  On the other
  hand, \eqref{eq:g'} implies that
  $\partial_u \mathcal{J}_*(u_{\theta_*})[u_{\theta_*}'] = 0$.  Since
  $\partial_u \mathcal{J}_*(u_{\theta_*})$ vanishes in two orthogonal
  directions, $u_*$ is a critical point of $\mathcal{J}_*$.

  In view of proposition~\ref{prop:J''},
  \begin{equation*}
    \partial_u^2 \mathcal{J}_*(u_*)[u_*, v]
    = \partial_u^2 \mathcal{J}_*(u_*)[u_*, v]
    - \partial_u \mathcal{J}_*(u_*)[v]
    = -\lambda_2 \int_\Omega u_* v .
  \end{equation*}
  In particular,
  $\partial_u^2 \mathcal{J}_*(u_*)[u_*, u_*] = -\lambda_2 \int_\Omega
  u_*^2 < 0$ and
  $\partial_u^2 \mathcal{J}_*(u_*)[u_*, u'_{\theta_*}] = 0$ (recalling
  that $\int u_{\theta_*} u'_{\theta_*} = 0$).  Using again the
  definition~\eqref{eq:proj:N*} of $t^*_{u_{\theta_*}}$, one obtains
  \begin{equation*}
    \partial_u^2 \mathcal{J}_*(u_*)[u'_{\theta_*},u'_{\theta_*}]
    = \frac{\lambda_2}{2} g''(\theta_*) \ne 0.
  \end{equation*}
  As a consequence, the bilinear form $\partial_u^2
  \mathcal{J}_*(u_*)$ is non-degenerate.
\end{proof}

Consequently, our main result is a consequence of the following
proposition.

\begin{prop}
  \label{prop.main}
  Let $\Omega= \intervaloo{0,1}^2$.
  Then the minimization problem \eqref{eq.s.theta} is solved by
  $\theta = \pi/4$ and the minimum is non-degenerate.  Therefore
  the minimum of $\mathcal{J}_*$ on $\mathcal{N}_*$ is achieved by a
  multiple of $u_{\pi/4}$ and it is a non-degenerate critical point.
\end{prop}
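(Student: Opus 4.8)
The plan is to run the two computer-assisted procedures described above and then convert their output into the three assertions of the statement. First I would call \code{exclude} with $\dl{x} = 0$, $\ul{x} = \high([\pi]/4)$ and a sufficiently large number $n$ of iterations to produce an interval $I \ni \pi/4$ with the property that every $\theta \in \intervalcc{0, \tfrac{\pi}{4}} \setminus I$ satisfies $g(\theta) > \ul{m} \ge g(\pi/4)$, where $\intervalcc{\dl{m}, \ul{m}} = [g]([\pi]/4)$. By the containment property of the verified integrator \code{integ} (built from \code{mean\_integ}), this is rigorous: whenever the lower bound of $[g]([x])$ exceeds $\ul{m}$, no point of $[x]$ can be a minimiser of $g$, so the interval is legitimately discarded. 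Hence the minimum of $g$ on $\intervalcc{0, \tfrac{\pi}{4}}$ is attained inside $I$.

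Second, on the interval $I$ I would verify the concavity condition~\eqref{cond.2d}, namely that $g''(\theta) > 0$ for all $\theta \in I$. By Proposition~\ref{prop:J''} this reduces to checking $h(\theta) > \tfrac14 + g(\theta)$ on $I$; as noted after~\eqref{cond.2d}, the interval extension $[h]$ returns intervals of the form $\intervalcc{\dl{y}, +\infty}$ — the singularity of the integrand only makes $h$ large — so it suffices to confirm $\dl{y} > \high\bigl(1 + [g]([\theta])\bigr)$, subdividing $I$ further as needed until this inequality holds uniformly. Once $g'' > 0$ on $I$, the function $g$ is strictly convex there, hence has at most one critical point in $I$. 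The symmetries of $g$ (see p.~\pageref{S:symmetries}) force $\theta = \pi/4$ to be a critical point and $\pi/4 \in I$, so it is the only one, and by convexity it is the minimum of $g$ on $I$, hence on $\intervalcc{0, \tfrac{\pi}{4}}$; this proves that~\eqref{eq.s.theta} is solved by $\theta = \pi/4$. In particular $g''(\pi/4) > 0$, i.e.\ $\pi/4$ is a non-degenerate critical point of $g$.

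Finally I would transfer the conclusion to $\mathcal{J}_*$ and $\mathcal{N}_*$. By the reduction of Section~\ref{sec.idea}, minimising $\mathcal{J}_*$ on $\mathcal{N}_*$ is equivalent, via~\eqref{eq.t.star}--\eqref{eq:minimize:rS} and~\eqref{eq.s.theta}, to minimising $g$ on $\intervalcc{0, \tfrac{\pi}{4}}$; moreover the dihedral symmetry of $\mathcal{S}_*$ shows that every minimiser of $g$ over $\intervalcc{0, 2\pi}$ lies in the orbit of $\pi/4$. Hence the minimum of $\mathcal{J}_*$ on $\mathcal{N}_*$ is achieved exactly by the multiples $t^*_{u_{\pi/4}} u_{\pi/4}$ of $u_{\pi/4}$ (with $t^*$ given by~\eqref{eq:proj:N*}), and non-degeneracy of this critical point is precisely Proposition~\ref{transfer-non-degeneracy} applied with $\theta_* = \pi/4$, whose hypothesis we have just verified.

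The main obstacle I expect is making the verified numerics close, i.e.\ ensuring the interval estimates are tight enough for the two checks above to succeed. The integrand $u_\theta^2 \log\abs{u_\theta}$ is non-smooth on the nodal line, so the adaptive scheme must go deep enough — and switch correctly between the basic rule near $\{u_\theta = 0\}$ and the Simpson rule away from it, with the fourth-derivative bounds in~\eqref{eq:bound-simpson-2D} evaluated by interval arithmetic — to make $\ul{m} - \dl{m}$ small enough that \code{exclude} clears all of $\intervalcc{0, \tfrac{\pi}{4}}$ except a tiny $I$. Simultaneously one must confirm that $h$ really exceeds $\tfrac14 + g$ throughout $I$, which could fail if $I$ were not small; so the precision used inside $[g]$ must be increased as $\dl{x}$ approaches $\pi/4$, and the size of $I$ and the recursion depths must be tuned accordingly. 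Everything else — the one-variable reduction, the formula~\eqref{eq:g''} for $g''$, and the non-degeneracy transfer — is already established in the preceding sections.
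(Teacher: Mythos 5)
Your proposal follows exactly the paper's argument: \code{exclude} localizes the minimizer of $g$ in a small interval $I \ni \pi/4$, the verified check of condition~\eqref{cond.2d} gives $g''>0$ on $I$ so that the symmetry-forced critical point $\pi/4$ is the unique critical point and the minimum there (hence on all of $\intervalcc{0,\pi/4}$) and is non-degenerate, and Proposition~\ref{transfer-non-degeneracy} transfers this to $\mathcal{J}_*$ on $\mathcal{N}_*$. This is precisely what the paper does in Section~\ref{sec.main}, including the practical point of increasing the integration precision as one approaches $\pi/4$.
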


\section{Proof: putting it all together}
\label{sec.main}

In this section, we finally run the machinery described above
on a computer and give the numerical results.  We
will sometimes write intervals as, for example,
$\compactinterval{1.74}{67}{81}$ instead of $[1.7467, 1.7481]$
to make easier to spot the common digits.

First, to locate an interval
$[\theta_{\text{min}}] \ni \tfrac{\pi}{4}$ where the entropy function
\eqref{entropy.f} attains its minimum, an small interval $[\pi]/4$
containing $\pi/4$ is determined and $[g]\bigl([\pi]/4\bigr)$ is
estimated with good precision by calling
\code{entropy($[\pi]/4$, depth, tol)} with $\code{depth} = 13$ and
$\code{tol} = 10^{-9}$.  The result is
\begin{equation*}
  [g]\bigl([\pi]/4\bigr) = \compactinterval{0.07905032}{01}{53}.
\end{equation*}
Then \code{exclude} with $n = 100$ is run on $[g]$ and outputs the
desired interval $[\theta_{\text{min}}]$:
\begin{equation*}
  [\theta_{\text{min}}] = \compactinterval{0.7}{51367}{85398}.
\end{equation*}
Finally, in order to verify that $\tfrac{\pi}{4}$ is the unique
critical point in $[\theta_{\text{min}}]$, we analyze the second
derivative of \eqref{entropy.f} in $[\theta_{\text{min}}]$ by checking
condition \eqref{cond.2d}.  Note that, if this condition is fulfilled,
our task is done.  We evaluate $[h]$ by
using \code{integ} with a depth $d = 8$ and tolerance
$\code{tol} = 10^{-3}$ and get:
\begin{equation*}
  [h]\bigl([\theta_{\text{min}}]\bigr)
  = \intervalcc{0.35179120, +\infty} .
\end{equation*}
Computing the right hand side of~\eqref{cond.2d} yields:
\begin{equation*}
  \tfrac{1}{4} + [g]\bigl([\theta_{\text{min}}]\bigr)
  = \intervalcc{0.324879, 0.333196}.
\end{equation*}
Clearly, these results show that condition~\eqref{cond.2d} is
satisfied, thereby proving Theorem~\ref{main}.

\bibliographystyle{amsplain}
\bibliography{biblio}

\end{document}